\newtheorem{thm}{Theorem}[section]
\newtheorem{prop}[thm]{Proposition}
\newtheorem{cor}[thm]{Corollary}
\theoremstyle{definition}
\newtheorem{definition}[thm]{Definition}
\newtheorem{example}[thm]{Example}
\theoremstyle{remark}
\numberwithin{equation}{section}
\begin{document}

\title{Certain actions of finitely generated groups on higher dimensional noncommutative tori}

\author{Zhuofeng He}
\address{Graduate School of Mathematical Sciences, the University of Tokyo,
3-8-1 Komaba, Meguro-ku, Tokyo 153-8914, Japan.}
\email{hzf@ms.u-tokyo.ac.jp}
%\urladdr{www.math.sc.edu/$\sim$howard} % Delete if not wanted.

%\author{Second Author}
%\address{Department of Mathematics, University of South Carolina,
%Columbia, SC 29208}
%\email{second@math.sc.edu}
%\urladdr{www.math.sc.edu/$\sim$second}

\begin{abstract}
We study certain actions of finitely generated abelian groups on higher dimensional noncommutative tori. Given a dimension $d$ and a finitely generated abelian group $G$, we apply a certain function to detect whether there is a simple noncommutative $d$-torus which admits a specific action by $G$. We also describe the condition of $G$ under which the associated crossed product is an AF algebra.

\end{abstract}

\maketitle\pagestyle{myheadings}\markboth{}{\hfill Certain actions of finitely generated groups on higher dimensional noncommutative tori\hfill}

\tableofcontents

\section{Introduction}
Certain finite group actions on the rotation algebras have already been systematically studied in \cite{Echterhorff} by S. Echterhoff, W. L\"uck, N. C. Phillips and S. Walters. It is known that up to conjugacy, a finite subgroup $F$ of ${\rm SL}_2(\mathbb{Z})$ satisfies $F=\mathbb{Z}_k$ where $k=2, 3, 4$ or $6$. According to their paper, by realizing certain generators for $\mathbb{Z}_k$, the action of $\mathbb{Z}_k$ on a rotation algebra can be given for each such $k$. We note that this action is the natural generalization of the action of ${\rm SL}_2(\mathbb{Z})$ on a torus $\mathbb{T}^2=\mathbb{R}^2/\mathbb{Z}^2$. If we require the rotation algebra to be a simple one, i.e. for the irrational rotation algebra, denoted by $\mathcal{A}_{\theta}$ where $\theta$ is an irrational number, the crosed product $\mathcal{A}_{\theta}\rtimes \mathbb{Z}_k$ is an AF-algebra for $k=2, 3, 4$ and $6$. Moreover for each $k$, the $K_0$-group $K_0(\mathcal{A}_{\theta}\rtimes \mathbb{Z}_k)$ has been calculated, which shows that $\mathcal{A}_{\theta}\rtimes \mathbb{Z}_k\cong \mathcal{A}_{\theta'}\rtimes \mathbb{Z}_{k'}$ if and only if $k=k'$ and $\theta=\theta'$ mod $\mathbb{Z}$, see \cite[Theorem 0.1]{Echterhorff}. Also they show that higher dimensional noncommutative tori admit flip actions of $\mathbb{Z}_2$, and for simple ones the crossed products are AF algebras.

As stated in their paper, the proof of the above theorem breaks into the following three steps. 
\begin{enumerate}
\item Computation of the K-theory of the crossed product.
\item Proof that the crossed product satisfies the Universal Coefficient Theorem.
\item Proof that the action has the tracial Rohlin property. 
\end{enumerate}
Their argument and proof in step (2) and (3) are general enough so that they are also effective for higher dimensional cases. Hence it is accessible if one obtains a proper realization of some finite group, a proper generalization of actions above to higher dimensional cases, and a way to compute the $K$-theory of the crossed product. In \cite{LEE}, the authors manage to obtain results in this way. They give a proper definition of such actions which generalize the action above, which they call it by \lq\lq canonical actions\rq\rq. Throughout this paper we are interested in actions of this type. A detailed discussion of such actions is in the Preliminaries. By realizing a cyclic group with the companion matrix of a cyclotomic polynomial, the authors study such actions of cyclic group on simple higher dimensional tori for certain dimensions. To finish step (1), i.e. to compute the $K$-groups of the associated crossed product, the authors first apply \cite[Lemma 2.1]{Echterhorff} and \cite[Theorem 4.1]{Packer} to write the crossed product in terms of another twisted group $C^*$-algebra, then to compute its $K$-theory, according to \cite[Theorem 0.3]{Echterhorff}, it is the same to compute the $K$-theory of untwisted group $C^*$-algebra, and finally by \cite[Theorem 0.1]{Luck} they manage to obtain desired the $K$-groups. The authors then show that for certain dimension $d$ and order $n$ there is a simple noncommutative $d$-torus on which $\mathbb{Z}_n$ acts and the $K_1$-group of the crossed product is not trivial, see \cite[Theorem 3.6]{LEE}. 

We continue their work and study such actions of finitely generated abelian groups on simple higher dimensional noncommutative tori. By combining the tensor structure of higher dimensional noncommutatvie tori and the realization method in \cite{LEE} for cyclic groups and actions, we manage to realize all of the possible actions under cyclic groups. Then for a given dimension $d$ by the structure of finite cyclic group of ${\rm GL}_d(\mathbb{Z}_d)$, we apply a certain function to tell when we can realize the action of the cyclic group on a simple noncommutative $d$-torus. Moreover we describe the condition under which the crossed product is an AF algebra. The function $W(n)$ is defined later. Hereby we remark that through this paper we consider noncommutative $d$-tori, where we usually suppose $d>1$, since there is no 1-dimensional noncommutative tori. However some of our results are actually compatible with the case when we allow $d=1$. 

\begin{thm}[Theorem \ref{main1}]
Given a dimension $d$ and an order $n>1$, there is an action of $\mathbb{Z}_n$ on a simple noncommutative $d$-tori $\mathcal{A}_{\Theta}$ if either $d-W(n)>1$ or $d-W(n)=0$. The crossed product $\mathcal{A}_{\Theta}\rtimes \mathbb{Z}_n$ is an AT algebra, and it is an AF algebra if and only if $d-W(n)=0$, and $n$ either admits a form of $2m$ where $m=3^j5^ip_l^{e_l}$, or admits a form of $2^k3^j5^i$ where $k\neq1$, $j\leq2$, $i\leq1$ and $e_l$ are all nonnegative integers, and $p_l>5$ is a prime number.

\end{thm}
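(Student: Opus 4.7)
The plan is to build the action blockwise from cyclic prime-power pieces and then deduce the AT/AF dichotomy from a $K$-theory computation. By the Chinese Remainder Theorem, $\mathbb{Z}_n \cong \prod_i \mathbb{Z}_{p_i^{e_i}}$ for the prime-power factorization $n = \prod_i p_i^{e_i}$. Following the realization of \cite{LEE}, each factor $\mathbb{Z}_{p_i^{e_i}}$ acts canonically on a simple noncommutative torus of dimension $\varphi(p_i^{e_i})$ via the companion matrix of the cyclotomic polynomial $\Phi_{p_i^{e_i}}$, with a mild adjustment when the prime $2$ enters. Tensoring these actions yields a canonical action of $\mathbb{Z}_n$ on a simple noncommutative $W(n)$-torus. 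To reach dimension $d$, I tensor with a simple noncommutative $(d-W(n))$-torus on which $\mathbb{Z}_n$ acts trivially; since no simple noncommutative $1$-torus exists, the extension is possible exactly when $d-W(n) = 0$ or $d-W(n) \geq 2$, which is the hypothesis of the theorem. Choosing the rotation parameters sufficiently generic guarantees simplicity of the full algebra.

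With the action in place I would compute $K_\ast(\mathcal{A}_\Theta \rtimes \mathbb{Z}_n)$ via the chain of reductions already recalled in the introduction: \cite[Lemma 2.1]{Echterhorff} and \cite[Theorem 4.1]{Packer} rewrite the crossed product as a twisted group $C^*$-algebra of $\mathbb{Z}^d \rtimes \mathbb{Z}_n$, \cite[Theorem 0.3]{Echterhorff} reduces this to the untwisted group $C^*$-algebra, and \cite[Theorem 0.1]{Luck} expresses the $K$-groups in terms of the $\mathbb{Z}_n$-action on the exterior algebra $\Lambda^\ast \mathbb{Z}^d$. Since steps (2) and (3) of \cite{Echterhorff} (the UCT and the tracial Rokhlin property) are dimension-independent, the classification machinery delivers the AT property for the crossed product.

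Finally, the AF characterization amounts to deciding when $K_1$ vanishes. If $d - W(n) \geq 2$, the trivially-acting tensor factor of $\mathcal{A}_\Theta$ contributes a nonzero free part to $K_1$ by the K\"unneth formula, forcing non-AF; hence AF requires $d = W(n)$. When $d = W(n)$ the $K_1$-computation reduces to a block-by-block invariant analysis of the $\mathbb{Z}_n$-action on $\Lambda^\ast \mathbb{Z}^{W(n)}$ assembled from the companion-matrix blocks, with $K_1$ vanishing precisely when every odd-degree invariant subspace is trivial. The main obstacle, and the heart of the theorem, is the arithmetic bookkeeping needed to convert this invariant computation into the two explicit regimes listed in the statement: the regime in which $v_2(n) = 1$ and the remaining odd primes are unconstrained, and the regime in which $n$ involves only the primes $2,3,5$ with small exponents. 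Each regime corresponds to a configuration of cyclotomic companion blocks whose exterior-algebra characters conspire to annihilate all odd invariants; enumerating these configurations yields the stated list of admissible $n$.
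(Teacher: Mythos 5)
Your construction of the action and the AT conclusion follow the paper's own route: prime-power companion-matrix blocks, a trivially-acted $(d-W(n))$-dimensional tensor factor, the Echterhoff--Packer--L\"uck reduction of the $K$-theory, and the K\"unneth theorem. That part is sound, including the observation that $d-W(n)\geq 2$ forces $K_1\neq 0$ through the factor $K_1(\mathcal{A}_{\Theta_0})\cong\mathbb{Z}^{2^{d-W(n)}-1}$.

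The gap is that the AF characterization --- which you yourself call ``the heart of the theorem'' --- is never actually carried out, and your summary of the answer suggests it would come out wrong. Three concrete inputs are missing. First, to rule out all but finitely many odd prime powers you need \cite[Theorem 3.6]{LEE} (equivalently L\"uck's formula $s_1=\frac{2^{n-1}-(n-1)^2}{2n}$ for primes): it gives $K_1(\mathcal{A}_{\Theta_i}\rtimes\mathbb{Z}_{p^e})\neq 0$ whenever $p^{e-1}(p-2)\geq 5$, leaving only $p^e\in\{3,3^2,5\}$ and $2^k$ with $k>1$ as candidates. Second, the case $p^e=3^2$ is covered by no general statement and requires a direct verification that $(\bigwedge^{2l+1}\mathbb{Z}^6)^{\mathbb{Z}_9}=0$ for the companion matrix of $\Phi_9$, i.e.\ that no odd-size subset of $\{1,2,4,5,7,8\}$ sums to $0$ modulo $9$. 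Third, and most importantly, when $n\equiv 2\pmod 4$ the factor $\mathbb{Z}_2$ must be absorbed into exactly one odd block as $\langle -A_j\rangle\cong\mathbb{Z}_{2p_j^{e_j}}$; this is the ``mild adjustment'' you allude to, and it is load-bearing twice over: it is what makes dimension $W(n)=W(n/2)$ attainable at all (a separate $\Phi_2$-block would cost a dimension and put you in the excluded case $d-W(n)=1$), and since $2p_j^{e_j}$ is even, L\"uck's theorem gives $s_1=0$ for that block regardless of $p_j$. This is precisely why \emph{one} unconstrained prime power $p_l^{e_l}$ with $p_l>5$ is permitted in the regime $n=2m$ --- not ``the remaining odd primes are unconstrained,'' as you write. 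For instance $n=2\cdot 7\cdot 11$ with $d=W(n)=16$ is not AF: after pairing the $2$ with one odd block, the other block ($\mathbb{Z}_7$ or $\mathbb{Z}_{11}$) still contributes nonzero $K_1$. Without these three steps the ``only if'' direction of the AF statement, and the exact list of admissible $n$, remains unproven.
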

Then we generalize the function $W$ for finite abelian groups, and realize actions with a similar argument. We obtain the following.
\begin{thm}[Theorem \ref{3.5}]\label{intro}
Given a dimension $d$ and an abelian finite subgroup $G\leq {\rm GL}_d(\mathbb{Z})$ with $d-W(G)>1$ or $d-W(G)=0$, there is an action of $G$ on  a simple noncommutative $d$-torus $\mathcal{A}_{\Theta}$. The crossed product $\mathcal{A}_{\Theta}\rtimes G$ is an AT algebra, and it is an AF algebra if and only if $d-W(G)=0$ and, if we write $G$ as $G\cong \prod_{j=1}^t\mathbb{Z}_{n_j}$ such that 
$$
W(G)=\sum_{l=1}^t W(\mathbb{Z}_{n_l}),
$$
each $n_l$ either admits a form of $2m$ where $m=3^j5^ip_r^{e_r}$, or admits a form of $2^k3^j5^i$ where $k\neq1$, $j\leq2$, $i\leq1$ and $e_r$ are all nonnegative integers, and $p_r>5$ is a prime number. 

\end{thm}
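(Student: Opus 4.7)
The strategy is to reduce Theorem \ref{intro} to the cyclic case of Theorem \ref{main1} using the primary decomposition of finite abelian groups together with the tensor product structure of higher-dimensional noncommutative tori. I would first fix a decomposition $G\cong\prod_{j=1}^{t}\mathbb{Z}_{n_j}$ realizing the equality $W(G)=\sum_{l=1}^{t}W(\mathbb{Z}_{n_l})$; because $W(\mathbb{Z}_n)$ records the minimal dimension in which $\mathbb{Z}_n$ can be faithfully represented in $\mathrm{GL}(\mathbb{Z})$ so as to admit a canonical action on a simple noncommutative torus, such a decomposition corresponds to a block-diagonal embedding $G\hookrightarrow\prod_{j}\mathrm{GL}_{d_j}(\mathbb{Z})\leq\mathrm{GL}_{d}(\mathbb{Z})$ in which each $\mathbb{Z}_{n_j}$ acts on its own summand $\mathbb{Z}^{d_j}$, with $\sum_{j}d_{j}=d$, and each $d_j-W(\mathbb{Z}_{n_j})$ is either $0$ or at least $2$---the slack $d-W(G)$ being absorbed on a single block whenever $d-W(G)\geq 2$.

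Applying Theorem \ref{main1} to each factor supplies a simple noncommutative $d_j$-torus $\mathcal{A}_{\Theta_j}$ on which $\mathbb{Z}_{n_j}$ acts canonically and whose crossed product is AT. I would then set $\mathcal{A}_{\Theta}:=\bigotimes_{j=1}^{t}\mathcal{A}_{\Theta_j}$, which is itself a noncommutative $d$-torus corresponding to the block-diagonal parameter $\Theta=\mathrm{diag}(\Theta_1,\ldots,\Theta_t)$, and equip it with the diagonal $G$-action. Generic choices of the $\Theta_j$ ensure that the combined entries together with $1$ are rationally independent in the appropriate sense, so that $\mathcal{A}_\Theta$ is simple. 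Since each $\mathbb{Z}_{n_j}$ acts trivially on $\mathcal{A}_{\Theta_i}$ for $i\neq j$, the crossed product splits as
\[
\mathcal{A}_{\Theta}\rtimes G\;\cong\;\bigotimes_{j=1}^{t}\bigl(\mathcal{A}_{\Theta_j}\rtimes\mathbb{Z}_{n_j}\bigr).
\]
Tensor products of AT (resp.\ AF) algebras of the nuclear type at hand remain AT (resp.\ AF), so the AT assertion and the \emph{if} direction of the AF characterization follow immediately.

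The hard part will be the \emph{only if} direction. Assuming $\mathcal{A}_\Theta\rtimes G$ is AF, the vanishing of $K_1$ should force $d-W(G)=0$ by the same rank count as in the cyclic case, carried out via \cite[Theorem 0.3]{Echterhorff} and \cite[Theorem 0.1]{Luck}. To extract the number-theoretic restrictions on each $n_l$, I would apply the Künneth theorem to the tensor splitting above: each factor $\mathcal{A}_{\Theta_l}\rtimes\mathbb{Z}_{n_l}$ lies in the UCT class by the arguments of \cite{Echterhorff} and has finitely generated, explicitly computed K-groups, so torsion in $K_0$ or a nonvanishing $K_1$ of any individual factor propagates into $K_\ast(\mathcal{A}_\Theta\rtimes G)$. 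By Theorem \ref{main1}, such pathological torsion is avoided exactly when $n_l$ lies in the listed family, yielding the desired equivalence. The delicate step is verifying that the Künneth sequence behaves well enough to rule out accidental cancellation between factors; I expect to control this by leveraging the concrete K-theory description of each $\mathcal{A}_{\Theta_j}\rtimes\mathbb{Z}_{n_j}$ obtained in the cyclic analysis and reducing the question to a direct inspection of torsion subgroups, rather than invoking a generic Künneth-style obstruction.
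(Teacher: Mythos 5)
Your proposal matches the paper's own proof in all essentials: the paper likewise fixes a cyclic decomposition $G\cong\prod_{l}\mathbb{Z}_{n_l}$ realizing $W(G)=\sum_l W(\mathbb{Z}_{n_l})$, applies Theorem \ref{main1} to each factor, tensors the resulting simple tori and crossed products block-diagonally, and reads off $K_*$ via the K\"unneth theorem exactly as in Section 3. Two of your worries are already dispatched by the paper's preliminaries: no genericity of the $\Theta_j$ is needed, since a direct sum of nondegenerate blocks is automatically nondegenerate (the paper's proposition on $\bigoplus_i\Theta_i$), and no ``accidental cancellation'' can occur in the K\"unneth sequence because every $K_*(\mathcal{A}_{\Theta_l}\rtimes\mathbb{Z}_{n_l})$ is free abelian with $K_0\neq 0$, so the Tor term vanishes and $K_1$ of the tensor product is zero exactly when each factor's $K_1$ is.
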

Especially we have Corollary \ref{cor} which states that the only possibility of a finite abelian group $F$ acting on a irrational rotation algebra is when $F=\mathbb{Z}_k$ where $k=2, 3, 4$ and $6$. In Section 3 we prove above theorems and discuss related results.

Then we find out that the function $W$ contains more information. We have the following.
\begin{thm}[Theorem \ref{no}]
Given a dimension $d$ and an order $n$, if $d-W(n)=1$, then there is no simple noncommutative torus on which $\mathbb{Z}_n$ acts in the above way.
\end{thm}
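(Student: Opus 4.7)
I would argue by contradiction. Suppose $\mathcal{A}_\Theta$ is a simple noncommutative $d$-torus on which $\mathbb{Z}_n$ acts canonically with $d = W(n)+1$, and let $A \in {\rm GL}_d(\mathbb{Z})$ generate the action. The plan is to combine the minimality characterization of $W(n)$ from the preceding sections with a block-form analysis of the cocycle condition $A^\top\Theta A - \Theta \in M_d(\mathbb{Z})$ to force $\Theta$ to have a rational coupling in one coordinate, and then invoke the standard center-triviality criterion for noncommutative tori.

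Because canonical actions are by construction direct sums of companion blocks of cyclotomic polynomials $\Phi_k$, $k \mid n$, and $d - W(n) = 1$, the matrix $A$ can be written (up to relabeling the generators, which does not affect the isomorphism class of the crossed product) in the block form
\[
A \ =\ \begin{pmatrix} B & 0 \\ 0 & \epsilon \end{pmatrix}, \qquad \epsilon \in \{+1, -1\},
\]
with $B \in {\rm GL}_{W(n)}(\mathbb{Z})$ a canonical realization of appropriate order (either $n$ when $\epsilon = +1$, or whose least common multiple with $2$ equals $n$ when $\epsilon = -1$ and $2 \mid n$). Writing $\Theta = \begin{pmatrix} \Theta_0 & \alpha \\ -\alpha^\top & 0 \end{pmatrix}$ in matching block form and expanding the cocycle condition, the off-diagonal block yields
\[
(\epsilon B^\top - I)\alpha \ \in\ \mathbb{Z}^{W(n)}.
\]

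The key step is to show that $\epsilon B^\top - I$ is invertible over $\mathbb{Q}$, i.e.\ that $\epsilon$ is not an eigenvalue of $B$. If it were, then $B$ would contain an internal companion block of type $\Phi_1$ (when $\epsilon = +1$) or $\Phi_2$ (when $\epsilon = -1$), and splitting it off and amalgamating it with the trailing $\epsilon$-block would yield a canonical realization of $\mathbb{Z}_n$ in strictly smaller dimension still admitting a simple invariant $\Theta$, contradicting the minimality built into the definition of $W(n)$. Hence $\alpha \in \mathbb{Q}^{W(n)}$ with denominators bounded by $|\det(\epsilon B^\top - I)|$. Choosing $N \in \mathbb{Z}_{>0}$ clearing those denominators and setting $q := N e_d \in \mathbb{Z}^d \setminus \{0\}$, we compute $\Theta q = (N\alpha,\,0)^\top \in \mathbb{Z}^d$, which violates the standard simplicity criterion that $\{m \in \mathbb{Z}^d : \Theta m \in \mathbb{Z}^d\} = \{0\}$ for simple $\mathcal{A}_\Theta$.

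The main obstacle is the exchange argument powering the invertibility step. A bare eigenvalue computation on $B$ is inconclusive when $n$ is even, since $B$ could a priori carry an internal $\Phi_2$-factor even at the minimal dimension $W(n)$. The resolution is combinatorial, using the explicit description of $W$ recalled in the statement of Theorem \ref{main1}: any such factor can be traded with the trailing $(-1)$-block, strictly decreasing the realization dimension while preserving the existence of a simple $\mathbb{Z}_n$-invariant $\Theta$, contradicting the minimality of $W(n)$. Once invertibility is secured, the rationality of $\alpha$ and the simplicity contradiction are immediate.
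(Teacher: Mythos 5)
Your overall strategy --- isolate the single extra dimension, show that the coupling vector $\alpha$ between it and the $W(n)$-dimensional part is killed because $\epsilon$ is not an eigenvalue of $B$, and conclude that $\Theta$ is degenerate --- is essentially the paper's proof. Your eigenvalue step is sound: since $\phi(k)$ is even for $k\geq 3$ while $\dim B=W(n)$, a $\Phi_1$- or $\Phi_2$-factor inside $B$ would leave an order-$n$ (or, when $n\equiv 2\bmod 4$, order-$n/2$) realization in dimension $<W(n)=W(n/2)$, contradicting Kuzmanovich's minimality; no invariant $\Theta$ needs to enter that contradiction, so you can drop the clause about ``preserving a simple invariant $\Theta$.'' The paper gets the same conclusion by the explicit computation $\Phi_{p^{e}}(1)=p\neq 0$ on the last row of $\Theta$; your formulation has the small advantages of covering the trailing $\epsilon=-1$ block and of surviving the weaker integrality form $A^{\top}\Theta A-\Theta\in M_d(\mathbb{Z})$ of the invariance condition.

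The genuine gap is the opening reduction. You claim that ``up to relabeling the generators'' an arbitrary order-$n$ element $A$ of $G_\Theta$ has the form $\mathrm{diag}(B,\epsilon)$. Relabeling generators is conjugation inside ${\rm GL}_d(\mathbb{Z})$, and a finite-order integer matrix is in general only ${\rm GL}_d(\mathbb{Q})$-conjugate, not ${\rm GL}_d(\mathbb{Z})$-conjugate, to a direct sum of cyclotomic companion blocks: for instance $\left(\begin{smallmatrix}1&1\\0&-1\end{smallmatrix}\right)$ has order $2$ but is not $\mathbb{Z}$-conjugate to $\mathrm{diag}(1,-1)$, since its eigenvectors span an index-$2$ sublattice of $\mathbb{Z}^2$. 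As written, your argument rules out only the split realizations and is silent about a simple $\Theta$ invariant under a non-split $A$ --- and the whole point of the theorem is to exclude \emph{every} realization. The fix is exactly the part of the paper's proof you skipped: take $Q\in{\rm GL}_d(\mathbb{Q})$ with $\Lambda=QAQ^{-1}$ in block form, observe that $A^{\top}\Theta A=\Theta$ is equivalent to $\Lambda^{\top}\Theta''\Lambda=\Theta''$ for $\Theta'':=(Q^{-1})^{\top}\Theta Q^{-1}$, run your block computation on $\Theta''$ to get $\Theta''e_d=0$ (or rational), and then the degenerating vector for the original $\Theta$ is a suitable integer multiple of $Q^{-1}e_d$ rather than of $e_d$; your existing ``clear the denominators by $N$'' step absorbs the denominators of $Q^{-1}$ as well. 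With that insertion the proof closes.
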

In Section 4 we give the proof of this theorem.

With enough knowledge of the function $W$ when considering cyclic group actions, we move to the topic of finitely generated abelian group acting on simple higher dimensional noncommutative tori. The action of torsion free part of the finitely generated group is defined to be adding dimensions concerning the crossed product. Similarly we study the condition by which the crossed product is an AF algebra.
\begin{thm}[Theorem \ref{main2}]
For a given dimension $d$ and a finitely generated abelian group $G\cong\Big( \prod_{i=1}^{s}\mathbb{Z}_{p_i^{e_i}} \Big)\times \mathbb{Z}^r$, if $d-W(n)=1$ and $r>0$, or $d-W(n)\neq1$ then there is a noncommutative $d$-torus $\mathcal{A}_{\Theta}$ admitting an action of $G$, denoted by $\alpha:G\rightarrow{\rm Aut}(\mathcal{A}_{\Theta})$. We require $\mathcal{A}_{\Theta}$ is pseudo-simple in the first case and simple in the second case. Then the crossed product $\mathcal{A}_{\Theta}\rtimes_{\alpha}G$ is a simple AT algebra, and it is an AF algebra if and only if $r=0$, $d-W(G)=0$ and G satisfies the last condition in Theorem \ref{intro}.
\end{thm}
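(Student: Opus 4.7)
My plan is to decompose $G \cong F \times \mathbb{Z}^r$ with $F=\prod_{i=1}^{s}\mathbb{Z}_{p_i^{e_i}}$ finite abelian of order $n$, handle each factor separately, and then assemble the action via the iterated crossed product $\mathcal{A}_\Theta \rtimes_\alpha G = (\mathcal{A}_\Theta \rtimes_{\alpha_F} F) \rtimes_{\bar{\alpha}_r} \mathbb{Z}^r$. This reduces the problem to controlling the inner crossed product via Theorem \ref{3.5} and iteratively applying Pimsner--Voiculescu style analyses to the $r$ outer crossed products by $\mathbb{Z}$.

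For the existence of the action I split into two cases. When $d-W(n) \neq 1$, Theorem \ref{3.5} directly furnishes an action $\alpha_F$ of $F$ on a simple noncommutative $d$-torus $\mathcal{A}_\Theta$ with $\mathcal{A}_\Theta \rtimes_{\alpha_F} F$ a simple AT algebra. I then define $\bar{\alpha}_r$ using $r$ commuting rotation-type automorphisms of $\mathcal{A}_\Theta$ acting diagonally on the canonical generators, extended trivially across the $F$-unitaries, with the rotation parameters chosen so that the combined $F \times \mathbb{Z}^r$ action remains free. In the case $d-W(n)=1$ and $r>0$, Theorem \ref{no} rules out a fully simple construction, so instead I take a pseudo-simple $\mathcal{A}_\Theta\cong \mathcal{A}_{\Theta'}\otimes C(\mathbb{T})$ where $\mathcal{A}_{\Theta'}$ is a simple $(d-1)$-torus carrying an $F$-action supplied by Theorem \ref{3.5}, and I arrange one of the $\mathbb{Z}^r$ generators to rotate the $C(\mathbb{T})$ factor by an irrational angle, thereby making $\mathcal{A}_\Theta\rtimes_\alpha G$ simple even though $\mathcal{A}_\Theta$ itself is not.

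For the AT structure I iterate: $\mathcal{A}_\Theta\rtimes_{\alpha_F} F$ is AT by Theorem \ref{3.5}, and each successive crossed product of a simple AT algebra of real rank zero by $\mathbb{Z}$ via an outer automorphism is again AT, by classification results of Lin and Kishimoto. Simplicity of $\mathcal{A}_\Theta\rtimes_\alpha G$ then follows from freeness of the combined action together with standard simplicity criteria for crossed products by discrete abelian groups. For the AF characterization, any crossed product by $\mathbb{Z}$ whose action fixes a nonzero $K_0$-class contributes a nonzero $K_1$ summand via Pimsner--Voiculescu; iterating shows $r>0$ forces $K_1 \neq 0$, obstructing the AF property. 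When $r=0$ we have $G=F$ and the required equivalence reduces precisely to Theorem \ref{3.5}, so the ``last condition in Theorem \ref{intro}'' is exactly what is needed.

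The main technical obstacle lies in the pseudo-simple case $d-W(n)=1$, $r>0$: the noncommutative torus $\mathcal{A}_\Theta$ carries a nontrivial primitive ideal space, and the $r$ translation generators must simultaneously break this ideal structure and commute with $\alpha_F$. I expect verifying that a generic choice of rotation parameters achieves this to require a careful density argument for the $\mathbb{Z}^r$-action on the spectrum of the center of $\mathcal{A}_\Theta\rtimes_{\alpha_F} F$, together with an explicit iterated Pimsner--Voiculescu computation to confirm both the AT classification and the precise $K$-theoretic criterion separating the AF from the non-AF regime.
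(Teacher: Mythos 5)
Your overall architecture --- split $G\cong F\times\mathbb{Z}^r$ with $F$ the torsion part, get the $F$-action from Theorem \ref{3.5}, realize the $\mathbb{Z}^r$ part by rotation-type automorphisms acting diagonally on the canonical generators, and in the case $d-W(n)=1$, $r>0$ use a pseudo-simple $\mathcal{A}_{\Theta'}\otimes C(\mathbb{T})$ with one $\mathbb{Z}$-generator rotating the $C(\mathbb{T})$ factor irrationally --- is exactly the paper's construction, and your Pimsner--Voiculescu observation that $r>0$ forces $K_1\neq 0$ (because $[1]\in K_0$ is always fixed, so $\ker(1-\alpha_*)|_{K_0}\neq 0$ injects into $K_1$ of the crossed product) is a correct and slightly more explicit version of the paper's reason for excluding $r>0$ from the AF regime. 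Where you diverge is the order of operations: you form $(\mathcal{A}_\Theta\rtimes_{\alpha_F}F)\rtimes\mathbb{Z}^r$ and then try to propagate the AT property outward through $r$ successive crossed products by $\mathbb{Z}$. The paper goes the other way: since crossing a noncommutative torus by a rotation-type automorphism (one homotopic to the identity, acting by phases on the generators) yields another noncommutative torus of one higher dimension (this is the content of \cite[Lemma 1.5]{Phi} as restated at the end of Section 4), the $\mathbb{Z}^r$ part is absorbed first, giving $\mathcal{A}_\Theta\rtimes_\alpha G\cong(\mathcal{A}_{\Theta'}\otimes\mathcal{A}_\theta\otimes\mathcal{A}_{\Theta_0})\rtimes_{\alpha'}F$ with a nondegenerate combined cocycle matrix. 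This lands squarely back in Theorem \ref{3.5} (finite abelian group acting on a simple torus), so all structural conclusions --- simplicity, tracial Rokhlin property, UCT, torsion-free $K$-theory via K\"unneth, hence AT --- come for free from the machinery already established.

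The genuine gap in your route is the claim that ``each successive crossed product of a simple AT algebra of real rank zero by $\mathbb{Z}$ via an outer automorphism is again AT.'' Outerness alone does not suffice for Kishimoto's theorem: you need the automorphism to be approximately inner \emph{and} to have the Rokhlin property (or a comparable condition such as uniform outerness of all nonzero powers), and you would have to verify this for the specific rotation automorphisms of $\mathcal{A}_\Theta\rtimes_{\alpha_F}F$, which is not automatic and is never checked in your sketch. Likewise your appeal to ``standard simplicity criteria for crossed products by discrete abelian groups'' and the proposed ``density argument on the spectrum of the center'' are doing work that the paper's reordering makes unnecessary. The repair is precisely the paper's device: do not iterate crossed products by $\mathbb{Z}$ at all; recognize $\mathcal{A}_\Theta\rtimes\mathbb{Z}^r$ as a single larger noncommutative torus whose defining matrix $\Theta'\oplus\bigl(\begin{smallmatrix}0&\theta\\-\theta&0\end{smallmatrix}\bigr)\oplus\Theta_0$ is nondegenerate by the tensor/direct-sum proposition in Section 2, and then quote Theorem \ref{3.5} once.
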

A detailed discussion is in Section 5.

%In the last section, we discuss a new definition by which whenever given a second countable locally compact topological group $G$ and a $G$-module $A$, with a 2-cocyce $\omega\in Z^2(G,A)$, we can obtain a $C^*$-algebra which is a generalization of twisted group $C^*$-algebra. We name it an extension twisted $C^*$-algebra. We study basic properties and examples of extension twisted $C^*$-algebras and give a theorem concerning their $K$-theory.

\section{Preliminaries}

For a second countable locally compact Hausdorff topological group $G$ with its modular function $\Delta_G:G\rightarrow (0, +\infty)$ and a Borel 2-cocycle $\omega\in Z(G,\mathbb{T})$, we define the associated reduced (resp. full) twisted group $C^*$-algebras, $C_r^*(G, \omega)$ (resp. $C^*(G, \omega)$) in the following way. Regard $L^1(G)$ as a vector space, and equip it with twisted convolution given by
$$
f\ast_{\omega}g(t)=\int_Gf(s)g(s^{-1}t)\omega(s,s^{-1}t)ds
$$
and the involution given by
$$
f^*(t)=\Delta_G(t^{-1})\overline{\omega(t,t^{-1})f(t^{-1})}.
$$
Then $L^1(G,\omega):=(L^1(G),\ast_{\omega},*)$ becomes a $\ast$-algebra which is named the {\it twisted convolution algebra}. As similar to the definition of group $C^*$-algebras, to consider nondegenerate representations of $L^1(G,\omega)$, we turn to a twisted analogue of unitary representations of $G$. More precisely, an {\it $\omega$-representation} of $G$ on a Hilbert space $\mathcal{H}$ is a Borel map $V:G\rightarrow \mathcal{U}(\mathcal{H})$ such that
$$
V(t)V(s)=\omega(t,s)V(ts)
$$ 
where $\mathcal{U}(\mathcal{H})$ stands for the unitary group of $\mathcal{H}$ endowed with the strong operator topology. For example we may define $L_{\omega}:G\rightarrow\mathcal{U}(L^2(G))$, the {\it regular $\omega$-representation} of $G$, by
$$
L_{\omega}(s)\xi(t)=\omega(s,s^{-1}t)\xi(s^{-1}t)
$$
for all $\xi\in L^2(G)$. For arbitrary $\omega$-representation $V:G\rightarrow\mathcal{U}(\mathcal{H})$, define $\pi:L^1(G,\omega)\rightarrow B(\mathcal{H})$ via
$$
\pi(f)=\int_Gf(s)V(s)ds
$$
for $f\in L^1(G,\omega)$. Then $\pi$ is a contractive $\ast$-homomorphism. As in the definition of a group $C^*$-algebra, with abuse of notations we may also denote $\pi$ by $V:L^1(G,\omega)\rightarrow B(\mathcal{H})$. Hence we can define the {\it reduced twisted group $C^*$-algebra} as
$$
C_r^*(G, \omega)=\overline{L_{\omega}(L^1(G,\omega))}.
$$
Moreover every nondegenerate representation of $L^1(G, \omega)$ is induced by an $\omega$-representation of $G$. We can see this by taking a norm one approximate identity in $L^1(G,\omega)$. 
Thus we obtain the universal representation $\pi_{u}$ of $L^1(G,\omega)$, and define the {\it full twisted group $C^*$-algebra} as
$$
C^*(G, \omega)=\overline{\pi_{u}(L^1(G,\omega))}.
$$
We remark that $C^*(G,\omega)$ has the universal property and the $\ast$-homomorphism $L_{\omega}:C^*(G, \omega)\rightarrow C_r^*(G, \omega)$ is surjective. The reduced and full twisted group $C^*$-algebras coincide if $G$ is amenable.

We realize higher dimensional noncommutative tori with twisted group $C^*$-algebras. For a given dimension $d$, take $G=\mathbb{Z}^d$. Throughout this paper we denote by $\mathcal{T}_d(\mathbb{R})$ the set of all $d\times d$ skew symmetric real matrices. Then for $\Theta=(\theta_{ij})\in \mathcal{T}_d(\mathbb{R})$ define the induced 2-cocycle $\omega_{\Theta}:\mathbb{Z}^d\times \mathbb{Z}^d\rightarrow \mathbb{T}$ by formula
$$
\omega_{\Theta}(x,y)=\exp(\pi i\langle \Theta x, y\rangle)
$$
for $x,y\in \mathbb{Z}^d$. Then $\mathcal{A}_{\Theta}:=C^*(\mathbb{Z}^d,\omega_{\Theta})$ is called a {\it noncommutative $d$-torus}. Since $\mathbb{Z}^d$ is discrete and amenable, if we take the standard basis of it, say $e_i $ for $i=1,\ldots, d$, and denote the regular $\omega_{\Theta}$-representation by $l_{\Theta}:\mathbb{Z}^d\rightarrow\mathcal{U}(\ell^2(\mathbb{Z}^d))$, then we have
$$
\mathcal{A}_{\Theta}:=C^*(\mathbb{Z}^d,\omega_{\Theta})=C^*\{l_{\Theta}(e_i)\mid i=1,\ldots, d \}
$$
where each $l_{\Theta}(e_i)$ is a unitary and they satisfy the commuting relations
$$
l_{\Theta}(e_i)l_{\Theta}(e_j)=\omega_{\Theta}(e_i,e_j)^2l_{\Theta}(e_j)l_{\Theta}(e_i)=\exp(2\pi i \theta_{ji})l_{\Theta}(e_j)l_{\Theta}(e_i).
$$
This is clearly a generalization of the rotation algebra to the $d$-dimensional case. We refer to the rotation algebra which is the case when $d=2$.

One may take the zero $d\times d$ matrix as $\Theta$ and then get $\mathcal{A}_{\Theta}=C(\mathbb{T}^d)$, however this is not so much in our interests since it is not simple. Thus we have the following.

\begin{definition}
The matrix $\Theta\in \mathcal{T}_d({\mathbb{R}})$ is said to be $nondegerate$ if whenever $x\in\mathbb{Z}^d$ satisfies $\exp(2\pi i\langle\Theta x,y\rangle)=1$ for all $y\in\mathbb{Z}^d$, we have $x=0$.
\end{definition}

\begin{thm}[{\cite[Theorem 1.9]{Phi}}]
Let $\Theta\in\mathcal{T}_d(\mathbb{R})$. Then the noncommutative $d$-torus $\mathcal{A}_{\Theta}$ is simple if and only if so is $\Theta$.
\end{thm}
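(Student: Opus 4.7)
\emph{Necessity of nondegeneracy.} I would dispose of the ``only if'' direction by contrapositive. Suppose $\Theta$ is degenerate, witnessed by some $x_0\in\mathbb{Z}^d\setminus\{0\}$ with $\exp(2\pi i\langle\Theta x_0,y\rangle)=1$ for every $y\in\mathbb{Z}^d$. The displayed commutation relation for the generators already implies that each $l_\Theta(e_i)$ commutes with $l_\Theta(x_0)$ (the rescaling phase is trivial at each $e_i$ by hypothesis), so $l_\Theta(x_0)$ lies in the center of $\mathcal{A}_\Theta$. The canonical faithful trace $\tau$ coming from the vector state at $\delta_0\in\ell^2(\mathbb{Z}^d)$ satisfies $\tau(l_\Theta(x_0))=0\ne 1=\tau(1)$, so $l_\Theta(x_0)$ is not a scalar. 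Hence $\mathcal{A}_\Theta$ has nontrivial center and cannot be simple.

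\emph{Sufficiency via inner averaging.} For the converse, let $I\subset\mathcal{A}_\Theta$ be a nonzero closed two-sided ideal; the goal is $1\in I$. Two ingredients drive the argument: (i) the dual $\mathbb{T}^d$-action $\alpha_z(l_\Theta(x))=z^x l_\Theta(x)$, whose Haar integration yields a faithful conditional expectation $E=\tau(\cdot)\cdot 1$ onto the fixed-point algebra $\mathbb{C}\cdot 1$; and (ii) the identity $l_\Theta(y)\,l_\Theta(x)\,l_\Theta(y)^{-1}=\exp(2\pi i\langle\Theta x,y\rangle)\,l_\Theta(x)$, which says that inner conjugation acts diagonally on Fourier modes, rescaling mode $x$ by the character $\chi_x(y)=\exp(2\pi i\langle\Theta x,y\rangle)$. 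Nondegeneracy is precisely the assertion that $\chi_x$ is nontrivial for every $x\ne 0$. My plan is to construct, for each $a\in\mathcal{A}_\Theta$, a sequence $\Phi_n$ of convex combinations of inner automorphisms $\Phi_n(b)=\sum_i\lambda_i^{(n)}\,l_\Theta(y_i^{(n)})\,b\,l_\Theta(y_i^{(n)})^{-1}$ with $\Phi_n(a)\to\tau(a)\cdot 1$ in operator norm. Since each $\Phi_n$ preserves the ideal $I$, applying this to $a^*a$ for $0\ne a\in I$ and invoking faithfulness of $\tau$ forces $\tau(a^*a)\cdot 1\in I$ with $\tau(a^*a)>0$, hence $1\in I$.

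\emph{Construction of the averages and main obstacle.} To build $\Phi_n$ I would first approximate $a$ in norm by a Fej\'er-type trigonometric polynomial $a_N=\sum_{\|x\|_\infty\le N}c_x\,l_\Theta(x)$ obtained by convolving $a$ against a Fej\'er kernel on $\mathbb{T}^d$ via the dual action. For each of the finitely many nonzero $x$ in the support of $a_N$, I would pick $y_x\in\mathbb{Z}^d$ with $\chi_x(y_x)\ne 1$ and apply the average $\tfrac1{M_x}\sum_{k=0}^{M_x-1}\mathrm{Ad}(l_\Theta(ky_x))$: when $\chi_x(y_x)$ is not a root of unity this is a geometric sum of magnitude $O(1/M_x)$ on the $x$-mode, and when it is a root of unity of order $m$ one takes $M_x=m$ so the sum vanishes identically on that mode; in both cases the $0$-mode is preserved. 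Composing these finitely many averages leaves $\tau(a_N)\cdot 1$ up to an arbitrarily small error, and a standard $\varepsilon/3$ estimate combines this with $\|a-a_N\|\to 0$. The main technical obstacle is achieving genuine norm convergence, since Fourier partial sums in $\mathcal{A}_\Theta$ need not converge in norm; the Fej\'er/Ces\`aro regularization coming from the dual action is what resolves this, exactly as in the classical treatment of the rotation algebra.
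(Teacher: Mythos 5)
Your argument is correct, but note that the paper does not prove this statement at all: it is quoted verbatim from Phillips (\cite[Theorem 1.9]{Phi}), where it is in turn attributed to Slawny's theorem on twisted group $C^*$-algebras of discrete abelian groups with nondegenerate cocycles. What you have written is essentially the standard self-contained proof of that result, and it goes through. The ``only if'' direction is fine once you add the (standard, but worth stating) fact that a unital simple $C^*$-algebra has trivial center: your central unitary $l_\Theta(x_0)$ with $\tau(l_\Theta(x_0))=0$ yields a non-scalar element of the center because a scalar unitary would have trace of modulus $1$. The ``if'' direction is the classical averaging scheme: conjugation by $l_\Theta(y)$ rescales the mode $x$ by the character $\chi_x(y)=\exp(2\pi i\langle\Theta x,y\rangle)$, nondegeneracy says $\chi_x$ is nontrivial for $x\neq0$, the Ces\`aro averages $\frac1M\sum_{k<M}\mathrm{Ad}(l_\Theta(ky_x))$ kill (or shrink) mode $x$ while contracting every other mode and fixing mode $0$, and the Fej\'er regularization via the dual $\mathbb{T}^d$-action supplies the norm-dense polynomial approximations needed to upgrade this to all of $\mathcal{A}_\Theta$. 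Two ingredients you use implicitly deserve a sentence each: faithfulness of the canonical trace $\tau=\langle\,\cdot\,\delta_0,\delta_0\rangle$ (which holds on the reduced algebra because $\delta_0$ is separating, and transfers to $C^*(\mathbb{Z}^d,\omega_\Theta)$ since $\mathbb{Z}^d$ is amenable so full and reduced coincide), and the fact that your composed maps $\Phi_n$ remain convex combinations of inner automorphisms, hence preserve the ideal $I$ and are contractions, which is what makes the $\varepsilon/3$ estimate close. With those remarks added, the proof is complete; compared with simply citing Phillips/Slawny as the paper does, your route has the advantage of being elementary and self-contained within the framework the paper has already set up.
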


\begin{prop}
For $\Theta_i\in\mathcal{T}_{d_i}({\mathbb{R}})$, $i=1,\ldots,m$, denote $\Theta:=\bigoplus_{i=1}^m\Theta_i\in \mathcal{T}_d({\mathbb{R}})$ where $d:=\sum_{i=1}^md_i$. Then $\Theta$ is nondegenerate if and only if each $\Theta_i$ is nondegenerate.
\end{prop}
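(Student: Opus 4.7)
The plan is to unpack the definition of nondegeneracy on each side and exploit the block-diagonal structure of $\Theta$, which makes the pairing $\langle \Theta x, y\rangle$ split as a sum of pairings on each block. Writing $x=(x_1,\ldots,x_m)$ and $y=(y_1,\ldots,y_m)$ with $x_i,y_i\in\mathbb{Z}^{d_i}$, the identity
\[
\langle \Theta x, y\rangle = \sum_{i=1}^{m}\langle \Theta_i x_i, y_i\rangle
\]
is the only ingredient we really need.

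For the ``if'' direction, assume each $\Theta_i$ is nondegenerate and suppose $x\in\mathbb{Z}^d$ satisfies $\exp(2\pi i\langle\Theta x,y\rangle)=1$ for every $y\in\mathbb{Z}^d$. First I would plug in test vectors $y$ that are supported in a single block, i.e.\ $y=(0,\ldots,0,y_i,0,\ldots,0)$ with $y_i\in\mathbb{Z}^{d_i}$ arbitrary. The displayed splitting collapses to $\exp(2\pi i\langle\Theta_i x_i,y_i\rangle)=1$ for all $y_i$, and nondegeneracy of $\Theta_i$ forces $x_i=0$. Running $i$ over $1,\ldots,m$ yields $x=0$.

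For the ``only if'' direction, I would argue contrapositively: if some $\Theta_{i_0}$ fails to be nondegenerate, there exists a nonzero $x_{i_0}\in\mathbb{Z}^{d_{i_0}}$ with $\exp(2\pi i\langle\Theta_{i_0} x_{i_0},y_{i_0}\rangle)=1$ for all $y_{i_0}\in\mathbb{Z}^{d_{i_0}}$. Then setting $x=(0,\ldots,0,x_{i_0},0,\ldots,0)\in\mathbb{Z}^d$, the same splitting identity gives $\langle\Theta x,y\rangle=\langle\Theta_{i_0} x_{i_0},y_{i_0}\rangle$ for every $y\in\mathbb{Z}^d$, so $\exp(2\pi i\langle\Theta x,y\rangle)=1$ for all $y$ while $x\neq 0$, contradicting nondegeneracy of $\Theta$.

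There is no real obstacle here; the entire content is bookkeeping around the direct-sum structure, and the main thing to be careful about is that $\langle\Theta x,y\rangle$ is an \emph{integer}-coefficient-weighted sum with real entries (so it is the exponential, not the pairing itself, that is required to be trivial), which is precisely why restricting to block-supported test vectors $y$ is legitimate and why the argument goes through without any arithmetic/rationality subtleties.
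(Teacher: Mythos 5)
Your proof is correct and takes essentially the same approach as the paper: test vectors supported on a single block for one direction, and embedding a degeneracy witness $x_{i_0}$ as $(0,\ldots,x_{i_0},\ldots,0)$ for the other. The only cosmetic differences are that you treat general $m$ directly via the splitting identity where the paper reduces to $m=2$ by induction, and you phrase the converse contrapositively.
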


\begin{proof}
By induction we only show the the case when $m=2$. If $\Theta_1$ and $\Theta_2$ are both nondegenerate, for $x=(x_1,x_2)\in\mathbb{Z}^d$ satisfying $\exp(2\pi i\langle\Theta x,y\rangle)=1$ for all $y\in\mathbb{Z}^d$, by taking $y_1\in\mathbb{Z}^{d_1}\times \{0\}$ and $y_2\in\{0\}\times \mathbb{Z}^{d_2}$ we obtain $\exp(2\pi i\langle\Theta_i x_i,y_i\rangle)=1$ for all $y_i\in\mathbb{Z}^{d_i}$, $i=1,2$. Then by nondegeneracy of $\Theta_1$ and $\Theta_2$, we obtain $x=(x_1, x_2)=0$, in other words $\Theta$ is nondegenerate.

Conversely if $\Theta$ is nondegenerate, then for $x_1\in \mathbb{Z}^{d_1}$ satisfying 
$$\exp(2\pi i\langle\Theta_1 x_1,y_1\rangle)=1$$
 for all $y_1\in\mathbb{Z}^{d_1}$, then we have for $x=(x_1, 0)\in \mathbb{Z}^d$,
 $$
 \exp(2\pi i\langle\Theta x,y\rangle)=\exp(2\pi i\langle\Theta_1 x_1,y_1\rangle)=1
 $$
 for all $y\in\mathbb{Z}^d$. Thus $x_1=0$, and similarly $x_2=0$. Hence $\Theta_1$ and $\Theta_2$ are both nondegenerate.
 \end{proof}

Then we discuss actions on noncommutatitive tori. We denote by $\mathcal{A}_{\Theta}$ a noncommutative $d$-torus as above, which is not necessarily simple. Through the regular representation $l_{\Theta}:\ell^1(\mathbb{Z}^d, \omega_{\Theta})\rightarrow B(\ell^2(\mathbb{Z}^d))$, we regard $\mathcal{A}_{\Theta}$ as a $C^*$-subalgebra in $B(\ell^2(\mathbb{Z}^d))$.
 
For a matrix $A\in {\rm GL}_d(\mathbb{Z})$, we have a unitary $u_A$ in $B(\ell^2(\mathbb{Z}^d))$ given by
$$
u_A\xi(x)=\xi(A^{-1}x)
$$
for $\xi\in \ell^2(\mathbb{Z}^d)$ and $x\in\mathbb{Z}^d$. Thus we have ${\rm Ad}\ u_A\in {\rm Aut}(B(\ell^2(\mathbb{Z}^d)))$. Then we have an action denoted by 
$$
{\rm Ad}\ u_{\bullet}:{\rm GL}_d(\mathbb{Z})\rightarrow {\rm Aut}(B(\ell^2(\mathbb{Z}^d))).
$$
Then for $\mathcal{A}_{\Theta}=C^*\{l_{\Theta}(e_i)\mid i=1,\ldots, d \}=\overline{\rm span}\{l_{\Theta}(x)\mid x\in \mathbb{Z}^d\}$, we have the fomula
$$
{\rm Ad}(u_A)(l_{\Theta}(x))\xi(y)=l_{(A^{-1})^t\Theta A^{-1}}(Ax)\xi(y)
$$
for $\xi\in\ell^2(\mathbb{Z}^d)$ and $y\in\mathbb{Z}^d$. Thus for $A\in {\rm GL}_d(\mathbb{Z})$ such that $\Theta=(A^{-1})^t\Theta A^{-1}$ we have ${\rm Ad}(u_A)\in{\rm Aut}(\mathcal{A}_{\Theta})$ by restriction. Then denote
$$
G_{\Theta}:=\{A\in{\rm GL}_d(\mathbb{Z})\mid \Theta=A^t\Theta A\},
$$
we obtain an action
$$
{\rm Ad}\ u_{\bullet}:G_{\Theta}\rightarrow {\rm Aut}(\mathcal{A}_{\Theta}).
$$
This is an action on $\mathcal{A}_{\Theta}$ which generalizes the action of ${\rm SL}_2(\mathbb{Z})$ on the rotation algebra $\mathcal{A}_{\theta}$ discussed in \cite{Echterhorff}. Particularly, when the dimension $d=2$, we have $G_{\Theta}={\rm SL}_2(\mathbb{Z})$.  We are especially interested in such actions of some subgroup $G$ of $G_{\Theta}$ on $\mathcal{A}_{\Theta}$.

Then we roughly state classification results for our cases, and refer to \cite{Echterhorff}, \cite{LEE},\cite{Phi} and \cite{Roh} for details. A simple $\mathcal{A}_{\Theta}$ is known to be with a unique tracial state and has tracial rank zero. For an action of a finite group on it, which has the tracial Rokhlin property, say $\alpha:G\rightarrow {\rm Aut}(\mathcal{A}_{\Theta})$, the resulting crossed product $\mathcal{A}_{\Theta}\rtimes_{\alpha}G$ becomes also simple, with a unique tracial state and with tracial rank zero. Particularly, for a finite subgroup $G\leq G_{\Theta}$, the action described in last paragraph of $G$ on simple $\mathcal{A}_{\Theta}$ is known to have the tracial Rokhlin property. Moreover the resulting crossed product $\mathcal{A}_{\Theta}\rtimes G$ satisfies the Universal Coefficient Theorem.
Thus the crossed products of such actions becomes classifiable. We also need the following theorem.

\begin{thm}[{\cite[Preposition 3.7]{Phi}}]
Let $\mathcal{A}$ be a simple infinite dimensional separable unital nuclear $C^*$-algebra with tracial rank zero and which satisfies the Universal Coefficient Theorem. Then $\mathcal{A}$ is a simple AH algebra with real rank zero and no dimension growth. If $K_*(\mathcal{A})$ is torsion free, $\mathcal{A}$ is an AT algebra. If, in addition, $K_1(\mathcal{A})=0$, then $\mathcal{A}$ is an AF algebra.
\end{thm}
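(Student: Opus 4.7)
The plan is to invoke Lin's classification theorem: two simple, separable, unital, nuclear $C^*$-algebras of tracial rank zero satisfying the UCT are isomorphic if and only if their Elliott invariants agree. With this in hand, the task reduces to producing, for each of the three conclusions, a model algebra in the target class (AH with real rank zero and no dimension growth, AT, or AF) realizing the same Elliott invariant as $\mathcal{A}$.

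For the first statement, I would appeal to the range-of-invariants theorem for simple AH algebras with real rank zero and no dimension growth, in the spirit of the work of Elliott, Gong, and Li. Under the hypotheses, $\mathcal{A}$ has real rank zero and stable rank one, so $K_0(\mathcal{A})$ is a weakly unperforated ordered group with the Riesz interpolation property; together with the trace simplex and the natural pairing with $K_0$, this places its Elliott invariant within the realizable range. Hence there is an AH algebra $\mathcal{B}$ with real rank zero and no dimension growth sharing the invariant, and Lin's theorem gives $\mathcal{A}\cong\mathcal{B}$.

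For the second and third statements one specializes the model algebra. If $K_*(\mathcal{A})$ is torsion free, the invariant is realized by a simple unital AT algebra with real rank zero (by the AT range-of-invariants results of Elliott and R{\o}rdam), so Lin's classification yields the AT conclusion. If additionally $K_1(\mathcal{A})=0$, the Elliott invariant reduces to the scaled ordered $K_0$-group, which by Effros--Handelman--Shen is the dimension group of some AF algebra; Elliott's classification of AF algebras then supplies an AF model, and Lin's theorem gives that $\mathcal{A}$ itself is AF. The main obstacle at each step is to confirm that the invariant of $\mathcal{A}$ lies in the relevant realizable range, chiefly through weak unperforation of $K_0$ and compatibility of the trace pairing with the order unit; both properties follow from the standard structure theory of tracial rank zero algebras, so the argument is essentially a matter of packaging the correct classification and realization results together.
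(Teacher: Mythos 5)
This statement is imported verbatim from Phillips (\cite[Proposition 3.7]{Phi}) and the paper gives no proof of its own, so there is nothing internal to compare against; your outline is the standard argument and matches how the result is established in the literature. Combining Lin's classification of simple separable unital nuclear tracial-rank-zero algebras satisfying the UCT with the range-of-invariant theorems (AH of real rank zero with no dimension growth in general; AT models when $K_*$ is torsion free, noting that weak unperforation plus torsion-freeness gives unperforation; Effros--Handelman--Shen and Elliott's AF classification when moreover $K_1=0$) is exactly the right packaging, and your identification of weak unperforation and the trace pairing as the points to verify is correct.
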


Then we introduce results of J. A. Jeong and J. H. Lee in \cite{LEE}. We use their realization method as a building block to obtain more realizations and actions.

For a give dimension $d$ and an order $n$ with $d=\phi(n)$, where $\phi$ is the Euler's totient function. The $n$th {\it cyclotomic polynomial} $\Phi_{n}(x)$ is defined by 
$$
\Phi_n(x)=\prod_{\substack{0<k\leq n\\ \gcd(k,n)=1}} (x-\exp (2\pi i \frac{k}{n})).
$$
$\Phi_{n}(x)$ is known to be a monic polynomial of degree $d=\phi(n)$, and with integer coefficients. Thus set

$$
C_n=\left( \begin{array}{cccccc}
0&0&0&\cdots&0&-a_0\\
1&0&0&\cdots&0&-a_1\\
0&1&0&\cdots&0&-a_2\\
\vdots&\vdots&\ddots&\vdots&\vdots&\vdots\\
0&0&0&\ddots&0&-a_{d-2}\\
0&0&0&\cdots&1&-a_{d-1}\end{array} \right),
$$
which is the companion matrix of $\Phi_n(x)$. Then we have $C_n$ is in ${\rm GL}_d(\mathbb{Z})$ and of order $n$.

\begin{thm}[{\cite[Theorem 4.2]{LEE}}]
Let $n\geq 3$ and $d:=\phi(n)$. Then there exist simple $d$-dimensional tori on which the group $\mathbb{Z}_n=\langle C_n\rangle$ acts canonically.
\end{thm}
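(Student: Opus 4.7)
The plan is to exhibit a nondegenerate $\Theta\in\mathcal{T}_d(\mathbb{R})$ satisfying $C_n^t\Theta C_n=\Theta$; this forces $C_n\in G_\Theta$, and hence $\mathbb{Z}_n=\langle C_n\rangle$ acts canonically on the simple noncommutative $d$-torus $\mathcal{A}_\Theta$. Introduce the real vector space
$$
V:=\{\Theta\in\mathcal{T}_d(\mathbb{R}) : C_n^t\Theta C_n=\Theta\}.
$$
The proof will naturally split into two parts: a representation-theoretic description of $V$ producing linearly invertible elements, and a measure-theoretic genericity argument ruling out arithmetic degeneracy.

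For the first part, I would use that $C_n$ is the companion matrix of $\Phi_n$, so $\mathbb{R}^d\cong\mathbb{R}[x]/(\Phi_n(x))$ as $\mathbb{R}[C_n]$-modules. For $n\geq 3$, $\Phi_n$ factors over $\mathbb{R}$ into $d/2$ distinct irreducible quadratics $x^2-2\cos(2\pi k/n)x+1$, one per pair $\{k,n-k\}$ of conjugate primitive $n$th roots. The Chinese Remainder Theorem then yields
$$
\mathbb{R}^d=V_1\oplus\cdots\oplus V_{d/2}
$$
as a decomposition into pairwise non-isomorphic 2-dimensional irreducible $\mathbb{R}[C_n]$-submodules, on each of which $C_n$ acts as a rotation by a distinct angle (so the summands have pairwise distinct characters and hence are non-isomorphic). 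Because the summands are pairwise non-isomorphic, any $C_n$-invariant bilinear form is block diagonal with respect to the decomposition. A direct computation shows that on each $V_i$ the $C_n$-invariant skew-symmetric forms constitute a one-dimensional space spanned by a nondegenerate area form. Hence $\dim_{\mathbb{R}}V=d/2\geq 1$, and a generic $\Theta\in V$ (all block coefficients nonzero) is invertible as a linear map $\mathbb{R}^d\to\mathbb{R}^d$.

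For the second part, recall that $\Theta$ fails to be nondegenerate iff some $x\in\mathbb{Z}^d\setminus\{0\}$ satisfies $\Theta x\in\mathbb{Z}^d$. For each fixed pair $(x,y)\in(\mathbb{Z}^d\setminus\{0\})\times\mathbb{Z}^d$, the set $\{\Theta\in V:\Theta x=y\}$ is an affine subspace of $V$, and it is a proper one: by the previous step there exists an invertible $\Theta_0\in V$, so $\Theta_0 x\neq 0$ and the linear evaluation $\mathrm{ev}_x:V\to\mathbb{R}^d$, $\Theta\mapsto\Theta x$, is not zero. Thus the set of degenerate $\Theta\in V$ is a countable union of proper affine subspaces of $V$, hence of Lebesgue measure zero. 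Any $\Theta$ in the (full measure, nonempty) complement is nondegenerate, and $\mathcal{A}_\Theta$ is then a simple $d$-dimensional noncommutative torus on which $\langle C_n\rangle\cong\mathbb{Z}_n$ acts canonically.

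The main obstacle is the representation-theoretic step --- confirming that $V$ is rich enough to contain invertible skew-symmetric operators. Once this is in hand, the arithmetic step reduces to a routine Baire/measure-theoretic genericity argument. A conceptually simpler averaging approach (starting from a nondegenerate skew-symmetric form and averaging over $\langle C_n\rangle$) is available, but verifying that the averaged form remains nondegenerate ultimately rests on essentially the same decomposition of $\mathbb{R}^d$.
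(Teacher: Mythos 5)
This theorem is imported from \cite[Theorem 4.2]{LEE}: the paper under review states it without proof, so there is no in-paper argument to compare against, and your proof has to stand on its own. It does --- both halves check out. For the algebraic half: since $\Phi_n$ splits over $\mathbb{R}$ into $d/2$ distinct irreducible quadratics, $\mathbb{R}^d$ decomposes into pairwise non-isomorphic $2$-dimensional $\mathbb{R}[C_n]$-modules $V_i$; moreover $V_i^*\cong V_i$ (the eigenvalue pair $\{e^{\pm 2\pi i k/n}\}$ is closed under inversion), so Schur's lemma does force every invariant bilinear form to vanish on $V_i\times V_j$ for $i\neq j$, and on each $2$-dimensional block every skew form is a multiple of the area form, which is invariant because $\det(C_n|_{V_i})=1$. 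Hence $V$ is $(d/2)$-dimensional and contains invertible matrices. For the arithmetic half: nondegeneracy in the paper's sense is exactly the condition $\Theta x\notin\mathbb{Z}^d$ for all $x\in\mathbb{Z}^d\setminus\{0\}$, and your point that $\mathrm{ev}_x$ is nonzero on $V$ (because $V$ contains an invertible element) makes each exceptional set $\{\Theta\in V:\Theta x=y\}$ empty or a proper affine subspace, so the countable union is Lebesgue-null in $V\cong\mathbb{R}^{d/2}$ and a nondegenerate invariant $\Theta$ exists. For comparison, the route in \cite{LEE} --- echoed in Section 4 of this paper, where $\mathcal{T}_{d,\Lambda}(\mathbb{R})$ is described by the entrywise relations $\theta_{kl}=\theta_{(k+1)(l+1)}$ coming from the companion matrix --- is the coordinate version of your space $V$: one solves $C_n^t\Theta C_n=\Theta$ explicitly to get a Toeplitz-type parametrization and then exhibits nondegenerate solutions. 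Your representation-theoretic description plus the genericity argument gives a cleaner existence proof that avoids matrix computations and adapts to any finite-order integer matrix with no eigenvalue $\pm1$; the explicit parametrization buys concrete formulas for $\Theta$, which this paper later needs (its Propositions in Section 4) to analyze the obstruction in the case $d-W(n)=1$.
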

We remark in this theorem the \lq\lq canonical action\rq\rq\ is the action we discussed above and what we are interested in. To compute the $K$-theory for associated crossed product of such action, we need the following theorem.

\begin{thm}[{\cite[Theorem 0.1]{Luck}}] \label{Luck thm}
Let $n,d \in\mathbb{N}$. Consider the extension of groups 
$$
1\rightarrow \mathbb{Z}^d \rightarrow \mathbb{Z}^d\rtimes_{\alpha} \mathbb{Z}_n \rightarrow \mathbb{Z}_n \rightarrow 1
$$
such that conjugation action $\alpha$ of $\mathbb{Z}_n$ on $\mathbb{Z}^d$ is free outside of the origin $0\in \mathbb{Z}^d$. Then $K_0(C^*(\mathbb{Z}^d\rtimes_{\alpha} \mathbb{Z}_n))\cong \mathbb{Z}^{s_0}$ for some $s_0\in \mathbb{Z}$ and
$$
K_1(C^*(\mathbb{Z}^d\rtimes_{\alpha} \mathbb{Z}_n))\cong \mathbb{Z}^{s_1},
$$
where $s_1=\sum_{l\geq0}{\rm rk}_{\mathbb{Z}}((\bigwedge\nolimits^{2l+1}\mathbb{Z}^d)^{\mathbb{Z}_n})$. If $n$ is even, $s_1=0$. If $n>2$ is prime and $d=n-1$, then $s_1=\frac{2^{n-1}-(n-1)^2}{2n}$.

\end{thm}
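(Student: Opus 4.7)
The plan is to route the computation through the Baum--Connes conjecture, which holds for the amenable group $G := \mathbb{Z}^d \rtimes_\alpha \mathbb{Z}_n$ and yields an isomorphism $K_*^G(\underline{E}G) \cong K_*(C^*(G))$. As a model for $\underline{E}G$ one takes $\mathbb{R}^d$ with the natural $G$-action (translations by $\mathbb{Z}^d$, linear action by $\mathbb{Z}_n$ through $\alpha$): it is contractible, $\mathbb{Z}^d$ acts freely and properly, and all $\mathbb{Z}_n$-stabilizers are finite. Since $\mathbb{Z}^d$ acts freely with quotient $\mathbb{T}^d$, induction reduces the problem to
$$
K_*^G(\mathbb{R}^d) \cong K_*^{\mathbb{Z}_n}(\mathbb{T}^d),
$$
and the hypothesis that $\alpha$ is free outside the origin implies that each nontrivial element of $\mathbb{Z}_n$ has only finitely many fixed points on $\mathbb{T}^d$, because the dual of the injective $g - I \colon \mathbb{Z}^d \to \mathbb{Z}^d$ has finite kernel.

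To compute $K_*^{\mathbb{Z}_n}(\mathbb{T}^d)$, I would apply L\"uck's equivariant Chern character, which rationally identifies equivariant $K$-theory with an equivariant Bredon-type homology theory assembled from the representation rings of the finite stabilizers. On the free part of $\mathbb{T}^d$ this recovers the invariant cohomology $H^*(\mathbb{T}^d;\mathbb{Z})^{\mathbb{Z}_n} = (\bigwedge\nolimits^*\mathbb{Z}^d)^{\mathbb{Z}_n}$ under the standard identification, and the finitely many fixed points are $0$-dimensional and so feed only into $K_0$. Parity then separates the contributions, so that $K_0$ is free abelian of some rank $s_0$ (combining the even exterior powers with the fixed-point contributions) and
$$
K_1 \cong \mathbb{Z}^{s_1}, \qquad s_1 = \sum_{l \geq 0} \operatorname{rk}_{\mathbb{Z}}\bigl((\bigwedge\nolimits^{2l+1}\mathbb{Z}^d)^{\mathbb{Z}_n}\bigr).
$$
The hardest step will be upgrading the \emph{rational} Chern character to the integral statement of torsion-freeness; this rests on the fact that the local contributions near each fixed point are governed by representation rings of cyclic stabilizers (which are torsion-free) together with the rigidity of $\mathbb{T}^d$ as a flat $\mathbb{Z}^d$-quotient.

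For the two special cases the arithmetic is elementary. If $n$ is even, the unique order-$2$ element $g \in \mathbb{Z}_n$ satisfies $\alpha(g)^2 = I$ with $\alpha(g) - I$ injective, so diagonalising over $\mathbb{Q}$ forces $\alpha(g) = -I$; this acts on $\bigwedge\nolimits^{2l+1}\mathbb{Z}^d$ as multiplication by $-1$, killing the invariants and giving $s_1 = 0$. If $n > 2$ is prime and $d = n - 1$, the action is equivalent to multiplication by $\zeta_n$ on $\mathbb{Z}[\zeta_n] \cong \mathbb{Z}^{n-1}$, and averaging $\det(1 + g \cdot x)$ over $\mathbb{Z}_n$ yields the generating function
$$
f(x) = \sum_{k=0}^{n-1} \operatorname{rk}_{\mathbb{Z}}\bigl((\bigwedge\nolimits^{k}\mathbb{Z}^{n-1})^{\mathbb{Z}_n}\bigr)\,x^k = \frac{1}{n}\left[(1+x)^{n-1} + (n-1) \cdot \frac{1+x^n}{1+x}\right],
$$
using $\prod_{j=1}^{n-1}(1+\zeta_n^j x) = (1+x^n)/(1+x)$ for $n$ odd. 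Extracting odd-degree coefficients via $s_1 = \tfrac{1}{2}(f(1) - f(-1))$ and applying L'H\^opital's rule at $x = -1$ then delivers $s_1 = \frac{2^{n-1}-(n-1)^2}{2n}$ after simplification.
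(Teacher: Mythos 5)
This statement is imported verbatim from Langer--L\"uck (\cite[Theorem 0.1]{Luck}); the paper under review gives no proof of it, so the only meaningful comparison is with the original source. Your overall strategy is in fact the same as theirs: Baum--Connes for the amenable group $\mathbb{Z}^d\rtimes_\alpha\mathbb{Z}_n$, reduction to the $\mathbb{Z}_n$-equivariant $K$-theory of $\mathbb{T}^d$ (where freeness outside the origin forces $\det(g-I)\neq 0$ and hence finite fixed-point sets), and an equivariant Chern character to read off the ranks from $(\bigwedge^{*}\mathbb{Z}^d)^{\mathbb{Z}_n}$ plus point contributions in even degree. Your treatment of the two special cases is correct and complete: for $n$ even the order-two element must act as $-I$ (diagonalizable of order $2$ over $\mathbb{Q}$ with eigenvalue $1$ excluded), killing odd invariants; and for $n>2$ prime with $d=n-1$ the Molien-type average $f(x)=\frac{1}{n}\bigl[(1+x)^{n-1}+(n-1)\frac{1+x^n}{1+x}\bigr]$ together with $s_1=\frac{1}{2}(f(1)-f(-1))$ does yield $\frac{2^{n-1}-(n-1)^2}{2n}$ (I checked: $f(1)=\frac{2^{n-1}+n-1}{n}$, $f(-1)=n-1$).

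The genuine gap is the step you yourself flag as hardest: passing from the \emph{rational} computation to the integral statement that $K_0$ and $K_1$ are free abelian of exactly these ranks. Your justification --- that the local contributions are representation rings of cyclic groups, which are torsion-free, plus ``rigidity'' of $\mathbb{T}^d$ --- is not a proof and is not even a reliable heuristic: Bredon homology and the relevant Atiyah--Hirzebruch-type spectral sequence can produce torsion from the interaction between the free stratum and the fixed points even when all coefficient modules are torsion-free, and there are also potential differentials and extension problems to rule out. Establishing torsion-freeness is precisely the main technical content of the cited Langer--L\"uck paper (a careful equivariant CW decomposition of $\mathbb{T}^d$ and an inductive Mayer--Vietoris analysis), so as written your argument proves only that $K_1\otimes\mathbb{Q}\cong\mathbb{Q}^{s_1}$. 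A secondary point: you silently identify the Baum--Connes left-hand side $K_*^{\mathbb{Z}_n}(\mathbb{T}^d)$ (equivariant $K$-\emph{homology}) with equivariant $K$-\emph{theory} of $\mathbb{T}^d$; you should either invoke equivariant Poincar\'e duality for the $\mathbb{Z}_n$-manifold $\mathbb{T}^d$ (checking the equivariant spin$^c$/orientation data and the degree shift by $d$) or bypass $K$-homology entirely via $C^*(\mathbb{Z}^d\rtimes\mathbb{Z}_n)\cong C(\mathbb{T}^d)\rtimes\mathbb{Z}_n$ and the Green--Julg isomorphism.
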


The authors in \cite{LEE} apply this method and obtain that for certain dimension $d$ and order $n$, the resulting crossed product is not an AF algebra. The theorem is stated as the following.
\begin{thm}\cite[Theorem 3.6]{LEE}
Let $n\geq 7$ be an odd integer and $d:=\phi(n)$. Consider the extension of groups $1\rightarrow\mathbb{Z}^d\rightarrow\mathbb{Z}^d\rtimes_{\alpha}\mathbb{Z}_n\rightarrow\mathbb{Z}_n\rightarrow 1$ with $\mathbb{Z}_n=\langle C_n\rangle$. If $2d\geq n+5$, then
$$
K_1(C^*(\mathbb{Z}^d\rtimes_{\alpha}\mathbb{Z}_n))\neq0.
$$
\end{thm}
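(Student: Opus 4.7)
The plan is to apply Lück's theorem (Theorem \ref{Luck thm}) to translate the non-vanishing of $K_1$ into a combinatorial statement about subsets of primitive $n$-th roots of unity, and then to settle that statement by an explicit construction that exploits the numerical hypothesis $2d \geq n+5$.

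First, Theorem \ref{Luck thm} gives $K_1(C^*(\mathbb{Z}^d \rtimes_\alpha \mathbb{Z}_n)) \cong \mathbb{Z}^{s_1}$ with $s_1 = \sum_{l \geq 0} \operatorname{rk}_{\mathbb{Z}}(\bigwedge\nolimits^{2l+1} \mathbb{Z}^d)^{\mathbb{Z}_n}$, so it is enough to produce one odd $k \geq 1$ for which $(\bigwedge\nolimits^k \mathbb{Z}^d)^{\mathbb{Z}_n} \neq 0$. Since $C_n$ is the companion matrix of the cyclotomic polynomial $\Phi_n(x)$, extending scalars to $\mathbb{C}$ diagonalises the action: $\mathbb{Z}^d \otimes_{\mathbb{Z}} \mathbb{C}$ decomposes into one-dimensional eigenspaces, one for each primitive $n$-th root of unity $\zeta$, on which $C_n$ acts as multiplication by $\zeta$. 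Consequently $\bigwedge\nolimits^k(\mathbb{Z}^d \otimes \mathbb{C})$ breaks into weight spaces indexed by the $k$-subsets $S$ of primitive roots, with weight $\prod_{\zeta \in S}\zeta$. Identifying the set of primitive $n$-th roots with $(\mathbb{Z}/n)^* \subseteq \{1,\ldots,n-1\}$, the rank of the $\mathbb{Z}_n$-invariants in $\bigwedge\nolimits^k \mathbb{Z}^d$ is precisely the number of $k$-subsets of $(\mathbb{Z}/n)^*$ whose elements sum to $0$ modulo $n$. The task therefore reduces to exhibiting an odd-size subset of $(\mathbb{Z}/n)^*$ whose sum is $\equiv 0 \pmod{n}$.

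I would then aim for the smallest odd size, namely three, and carry out a case analysis on the small prime factors of $n$. The natural candidates are $\{1,2,n-3\}$ when $3 \nmid n$; $\{1,4,n-5\}$ when $3 \mid n$ but $5 \nmid n$; and $\{1,7,n-8\}$ when both $3 \mid n$ and $5 \mid n$. In each case $n$ odd together with the indicated divisibility conditions forces all three entries to lie in $(\mathbb{Z}/n)^*$, the sum is a multiple of $n$, and the three entries are distinct whenever $n$ is large enough. The small exceptions where distinctness or coprimality could fail (for instance $n = 9$, which would cause a collision in the second construction) are exactly the values that the hypothesis $2d \geq n+5$ already excludes.

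The main obstacle, and the essential place where the hypothesis $2d \geq n+5$ is really used, is to rule out the remaining case $3 \cdot 5 \cdot 7 \mid n$, in which all of the candidates above fail. But $105 \mid n$ would give $\phi(n)/n \leq (2/3)(4/5)(6/7) = 48/105 < 1/2$, so $2\phi(n) < n < n+5$, contradicting the hypothesis. The three cases above therefore exhaust all $n$ to which the theorem applies, and the construction yields $\operatorname{rk}_{\mathbb{Z}}(\bigwedge\nolimits^3 \mathbb{Z}^d)^{\mathbb{Z}_n} \geq 1$; consequently $s_1 \geq 1$ and $K_1(C^*(\mathbb{Z}^d \rtimes_\alpha \mathbb{Z}_n)) \neq 0$.
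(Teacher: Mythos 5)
Your proof is correct, and it is worth noting that the paper itself does not prove this statement at all --- it imports it verbatim from \cite{LEE} --- so the only internal point of comparison is the paper's $\mathbb{Z}_9$ computation in Section 3, which uses exactly your reduction: diagonalize $C_n$ over $\mathbb{C}$, identify $\mathrm{rk}_{\mathbb{Z}}\bigl((\bigwedge^{k}\mathbb{Z}^d)^{\mathbb{Z}_n}\bigr)$ with the number of $k$-subsets of $(\mathbb{Z}/n)^*$ summing to $0 \bmod n$, and feed the answer into Theorem \ref{Luck thm}. (One small point you leave implicit: Lück's theorem requires the conjugation action to be free off the origin, which holds for $\langle C_n\rangle$ because no power $\zeta^j$, $0<j<n$, of a primitive $n$-th root of unity equals $1$.) Where you genuinely diverge is the endgame. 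The shape of the hypothesis $2d\ge n+5$ --- compare the prime case $s_1=\frac{2^{n-1}-(n-1)^2}{2n}$ quoted in Theorem \ref{Luck thm} --- strongly suggests the source proves non-vanishing by a character-sum estimate of the form $2ns_1=\sum_j\bigl[\prod_a(1+\omega^{ja})-\prod_a(1-\omega^{ja})\bigr]>0$, i.e.\ by counting all zero-sum subsets at once. You instead exhibit a single $3$-element zero-sum subset of $(\mathbb{Z}/n)^*$: $\{1,2,n-3\}$ if $3\nmid n$, $\{1,4,n-5\}$ if $3\mid n$, $5\nmid n$, and $\{1,7,n-8\}$ if $15\mid n$, using $2\phi(n)\ge n+5$ only to exclude $n=9$, $n=15$, and $105\mid n$ (the last via $\phi(n)/n\le 48/105<1/2$). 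I checked these cases: coprimality follows from $n$ odd together with the stated divisibility assumptions, the sums all equal $n$, and distinctness fails only at the excluded values, which do satisfy $2\phi(n)<n+5$ (indeed the paper's own computation shows $s_1=0$ for $n=9$, so some exclusion is genuinely necessary). Your route is more elementary and makes transparent exactly which $n$ are obstructions, at the cost of giving only $s_1\ge 1$ rather than a quantitative lower bound; the counting approach gives growth of $s_1$ but hides the small exceptional cases inside an inequality.
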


We denote by $\mathfrak{N}$ the smallest subcategory of the category of separable $C^*$-algebras,  which contains separable Type I algebras and is closed under taking ideals, quotients, extensions, inductive limits, stable isomorphisms and crossed products by $\mathbb{Z}$ and $\mathbb{R}$, then the following theorem holds.
\begin{thm}[{\cite[K\"unneth theorem]{Sch}}] \label{Kunneth}
Let $\mathcal{A}$ and $\mathcal{B}$ be $C^*$-algebras with $\mathcal{A}$ in $\mathfrak{N}$. Then there is a natural short exact sequence
$$
0\rightarrow K_*(\mathcal{A})\otimes K_*(\mathcal{B}) \xrightarrow{\alpha} K_*(\mathcal{A}\otimes\mathcal{B}) \xrightarrow{\beta} {\rm Tor}(K_*(\mathcal{A}),K_*(\mathcal{B})) \rightarrow 0.
$$
The sequence is $\mathbb{Z}_2$-graded with $\deg\alpha=1$, $\deg\beta=1$, where $K_p\otimes K_q$ and ${\rm Tor}(K_p\otimes K_q)$ are given degree $p+q$ for $p,q\in \mathbb{Z}_2$.

\end{thm}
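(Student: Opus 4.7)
The plan is to follow Schochet's original argument and prove that the class $\mathcal{C}$ of separable $C^*$-algebras $\mathcal{A}$ for which the claimed short exact sequence is exact and natural in both variables, for every separable $\mathcal{B}$, actually equals $\mathfrak{N}$. One shows that $\mathcal{C}$ contains the generating class (separable Type I algebras) and is closed under all of the operations used to build $\mathfrak{N}$, after which the inclusion $\mathfrak{N} \subseteq \mathcal{C}$ is immediate.

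The first step is to construct the map $\alpha$ itself as the external $K$-theoretic product; functoriality and stability of the Kasparov product supply naturality in both variables. Next one handles the case when $K_*(\mathcal{A})$ is free abelian: there the Tor term vanishes and it suffices to verify that $\alpha$ is an isomorphism. This is done by checking $\mathcal{A} = \mathbb{C}$ by hand (where both sides reduce to $K_*(\mathcal{B})$), passing to finite direct sums and matrix algebras by additivity and stability, and finally invoking continuity of $K$-theory to reach an arbitrary $\mathcal{A} \in \mathfrak{N}$ with free $K_*(\mathcal{A})$.

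The heart of the argument is a \emph{geometric projective resolution}: for any $\mathcal{A} \in \mathfrak{N}$, produce an exact sequence
$$
0 \to \mathcal{A}_1 \to \mathcal{A}_0 \to \mathcal{A} \otimes \mathcal{K} \to 0
$$
with $\mathcal{A}_0, \mathcal{A}_1 \in \mathfrak{N}$ and both $K_*(\mathcal{A}_0)$ and $K_*(\mathcal{A}_1)$ free abelian. Such a resolution is built by selecting a free abelian cover $F \to K_*(\mathcal{A})$, realizing $F$ as the $K$-theory of a suitable direct sum $\mathcal{A}_0$ of suspensions and copies of $\mathcal{K}$, and taking $\mathcal{A}_1$ to be the associated mapping cone. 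Tensoring with $\mathcal{B}$ remains exact by nuclearity of $\mathcal{A}_0$. Applying the six-term exact sequence on both sides and comparing via $\alpha$ and the free-$K_*$ case identifies $K_*(\mathcal{A}) \otimes K_*(\mathcal{B})$ with the cokernel and $\mathrm{Tor}(K_*(\mathcal{A}), K_*(\mathcal{B}))$ with the kernel of the induced connecting map, yielding the required short exact sequence. The degree shift attributed to $\beta$ is inherited directly from the connecting homomorphism.

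What remains is closure of $\mathcal{C}$ under the operations defining $\mathfrak{N}$. Extensions and ideal/quotient pairs are handled by a diagram of six-term sequences together with a variant of the five-lemma; inductive limits use continuity of $K$-theory together with the fact that $\otimes$ and $\mathrm{Tor}$ commute with filtered colimits of abelian groups; stable isomorphism is immediate; and crossed products by $\mathbb{Z}$ and $\mathbb{R}$ are accommodated via the Pimsner--Voiculescu exact sequence and Connes' Thom isomorphism, respectively, which transport the Künneth sequence along the induced six-term sequences. The main obstacle is the geometric projective resolution step: one must realize an abstract projective resolution of $K_*(\mathcal{A})$ by genuine $C^*$-algebras inside the bootstrap class, which forces the use of stabilizations and mapping cones in place of $\mathcal{A}$ itself, and then one must verify that the comparison map between the resulting topological and algebraic Tor/tensor objects is natural in a way compatible with all of the closure operations above.
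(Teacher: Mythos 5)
The paper does not prove this statement at all: it is quoted from Schochet's K\"unneth theorem, with $\mathfrak{N}$ the bootstrap class, and is used purely as a black box to compute $K_*$ of tensor products of crossed products. Your outline is a faithful reconstruction of the cited source's own argument (the external product map, the free-$K_*$ case for the elementary building blocks, geometric projective resolutions via stabilization and mapping cones, and closure of the K\"unneth class under the bootstrap operations), so it takes essentially the same route as the proof the paper implicitly relies on.
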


\section{Realization of cyclic groups acting on noncomutative $d$-tori}

Given skew symmetric real matrices $\Theta_1$ and $\Theta_2$, suppose that $\Theta=\Theta_1\oplus \Theta_2$, then naturally we have 
$$
\mathcal{A}_\Theta\cong\mathcal{A}_{\Theta_1}\otimes \mathcal{A}_{\Theta_2}.
$$
Then for any finite group actions described in the Preliminaries of 
$
G_1\leq G_{\Theta_1}$ on $ \mathcal{A}_{\Theta_1}$, and 
$
G_2 \leq G_{\Theta_2} $ on $ \mathcal{A}_{\Theta_2},
$
we could naturally get a finite group action of
$
G_1 \times G_2
$ on
$
\mathcal{A}_{\Theta},
$
one can easily verify that $G=G_1\times G_2 \leq G_{\Theta}$ and the resulting action is the restriction of ${\rm Ad}\ u_{\bullet}:{\rm GL}_d(\mathbb{Z})\rightarrow {\rm Aut}(B(\ell^2(\mathbb{Z}^d))),$ where we denote by $d$ the degree of $\Theta$.
Moreover we have 
$$
\mathcal{A}_{\Theta} \rtimes G \cong (\mathcal{A}_{\Theta_1} \rtimes G_1) \otimes (\mathcal{A}_{\Theta_2} \rtimes G_2)
$$
by an isomorphism defined by
$$
(a\otimes b, (g,h))\mapsto (a,g) \otimes(b,h).
$$

For a given dimension $d$, consider ${\rm GL}_d(\mathbb{Z})$. By \cite[Theroem 2.7]{Kuz}, $\mathrm{GL}_d(\mathbb{Z})$ has an element of order $n=\prod_{i=1}^{t} p_i^{e_i}$ if and only $W(n)\leq d$, where $p_1<\cdots<p_t$ are all prime numbers and
$$
W(n):= \begin{cases}

\sum_{i=1}^t (p_i-1)p_i^{e_i-1}-1, &p_1^{e_1}=2;\\
\sum_{i=1}^t (p_i-1)p_i^{e_i-1},& {\rm otherwise,}
\end{cases}
$$
thus we are able to obtain finitely many candidates for $n$, which is the possible order of a realizable cyclic group in ${\rm GL}_d(\mathbb{Z})$. For those $n$'s with $\phi(n)=d$, where $\phi$ is the Euler's totient function, by \cite[Theorem 4.2]{LEE}  we may find a noncommutative $d$-torus $\mathcal{A}_{\Theta}$ which is simple, namely figure out the form of the nondegenerate $\Theta$, and realize an action of $\mathbb{Z}_n$ on $\mathcal{A}_{\Theta}$. Moreover we could compute $K_*(\mathcal{A}_{\Theta}\rtimes \mathbb{Z}_n )$ and then know whether $\mathcal{A}_{\Theta}\rtimes \mathbb{Z}_n$ is AF or not. However this way does not work for those $n$'s with $\phi(n)\neq d$, paticularly when $d$ is odd. We manage a way of realizing cyclic groups for more candidates $n$ to find actions in the following.

By Kuzmanovich's proof in \cite{Kuz}, suppose $n=\prod_{i=1}^{t} p_i^{e_i}$ with $p_1<\cdots<p_t$ such that $W(n)\leq d$. If $p_1^{e_1}\neq2$, by applying the companion matrix we could construct a $d_i\times d_i$ matrix $A_i$ of order $p_i^{e_i}$ where $d_i=(p_i-1)p_i^{e_i-1}$. We note that $\phi(p_i^{e_i})=(p_i-1)p_i^{e_i-1}$. Then set $B:=\bigoplus_{i=1}^t A_i$, which is a $W(n)\times W(n)$ matrix with $W(n)\leq d$.% and $A:=B\oplus I_{d-W(n)}$ is the desired matrix of dimension $d$ and order $n$. 

Thus by \cite[Theorem 4.2]{LEE}, for each $i$, we could find nondegenerate $\Theta_i$ in $\mathcal{T}_{d_i}(\mathbb{R})$ such that $\langle A_i \rangle=\mathbb{Z}_{p_i^{e_i}}$ acts on $\mathcal{A}_{\Theta_i}$.
If $d-W(n)>1$, set $A:=B\oplus I_{d-W(n)}$, which is a $d\times d$ matrix of order $n$.  It is possible to find a nondegenerate $\Theta_0$. Then set 
$$
\Theta:=\Big(\bigoplus_{i=1}^t\Theta_i\Big)\oplus \Theta_0,
$$
which is nondegenerate, thus we have an action of
$
\langle A \rangle =\mathbb{Z}_{n}$ on $ \mathcal{A}_{\Theta}.
$
Moreover by our former discussion one may check that
$$
\mathcal{A}_{\Theta}\rtimes \mathbb{Z}_n\cong \Big(\bigotimes_{i=1}^t(\mathcal{A}_{\Theta_i}\rtimes\mathbb{Z}_{p_i^{e_i}}) \Big)\otimes \mathcal{A}_{\Theta_0}.
$$

By the method above we may realize more finite cyclic group actions on higher dimensional noncommutative tori. To compute their $K$-groups, we apply the K\"unneth theorem in \cite{Sch}, stated as Theorem \ref{Kunneth}.

To verify the conditions for our problem, we have the following confirmation.  Firstly by \cite[Proposition 3.4]{LEE} each $\mathcal{A}_{\Theta_i}\rtimes\mathbb{Z}_{p_i^{e_i}}$ is an AT algebra, and so is $\mathcal{A}_{\Theta_0}$. Since $C(\mathbb{T})\otimes F$ is separable and of Type I, where $F$ stands for a finite dimensional $C^*$-algebra, then every $\mathcal{A}_{\Theta_i}\rtimes\mathbb{Z}_{p_i^{e_i}}$ and $\mathcal{A}_{\Theta_0}$ are in $\mathfrak{N}$. Secondly by \cite[Remark 3.2]{LEE} and \cite[Theorem 0.1]{Luck} we know that for each $i$,
$$
K_*(\mathcal{A}_{\Theta_i}\rtimes\mathbb{Z}_{p_i^{e_i}})\cong K_*(C^*(\mathbb{Z}^{d_i}\rtimes\mathbb{Z}_{p_i^{e_i}}, \Tilde{\omega}_{\Theta_i})\cong K_*(C^*(\mathbb{Z}^{d_i}\rtimes\mathbb{Z}_{p_i^{e_i}})), 
$$
which is torsion free.

Then applying K\"unneth theorem, we obtain
$$
K_*(\mathcal{A}_{\Theta}\rtimes \mathbb{Z}_n)=\Big(\bigotimes_{i=1}^t K_*(\mathcal{A}_{\Theta_i}\rtimes \mathbb{Z}_{p_i^{e_i}})\Big)\otimes K_*(\mathcal{A}_{\Theta_0}).
$$
By \cite[Propersition 3.1]{LEE} $\mathcal{A}_{\Theta}\rtimes \mathbb{Z}_n$ satisfies the Universal Coefficient Theorem. As we have already stated before, $\mathcal{A}_{\Theta}\rtimes \mathbb{Z}_n$ is then classifiable. We obtain the $K_*(\mathcal{A}_{\Theta}\rtimes \mathbb{Z}_n)$ is torsion free, thus under this realization the crossed product $\mathcal{A}_{\Theta}\rtimes \mathbb{Z}_n$ is an AT algebra. However, We note that $d-W(n)>1$ and
$$
K_0(\mathcal{A}_{\Theta_0})\cong K_1(\mathcal{A}_{\Theta_0})\cong \mathbb{Z}^{2^{d-W(n)}-1}.
$$
Thus in this case $K_1(\mathcal{A}_{\Theta}\rtimes \mathbb{Z}_n)\neq 0$, and $\mathcal{A}_{\Theta}\rtimes \mathbb{Z}_n$ is not an AF-algebra.

If $d-W(n)=0$, the situation is essentially the same to our discussion before. Instead we set
$
A:=B=\bigoplus_{i=1}^t A_i
$, which is a $d\times d$ matrix of order $n$. Then set
$$
\Theta:=\Big(\bigoplus_{i=1}^t\Theta_i\Big),
$$
which is nondegenerate, thus we have an action of $\langle A\rangle=\mathbb{Z}_n$ on $\mathcal{A}_{\Theta}.$ Then we obtain
$$
\mathcal{A}_{\Theta}\rtimes \mathbb{Z}_n\cong \bigotimes_{i=1}^t(\mathcal{A}_{\Theta_i}\rtimes\mathbb{Z}_{p_i^{e_i}}) 
$$
and again by the K\"unneth theorem,
$$
K_*(\mathcal{A}_{\Theta}\rtimes \mathbb{Z}_n)=\bigotimes_{i=1}^t K_*(\mathcal{A}_{\Theta_i}\rtimes \mathbb{Z}_{p_i^{e_i}}).
$$
Thus $K_1(\mathcal{A}_{\Theta}\rtimes \mathbb{Z}_n)= 0$ if and only if $K_1(\mathcal{A}_{\Theta_i}\rtimes \mathbb{Z}_{p_i^{e_i}})=0$ for each $i$.

Then our attention is driven to each $\mathcal{A}_{\Theta_i}\rtimes \mathbb{Z}_{p_i^{e_i}}$. Since in the  case when $p_i$ is an odd prime, by \cite[Theorem 3.6]{LEE}, we have $K_1(\mathcal{A}_{\Theta_i}\rtimes \mathbb{Z}_{p_i^{e_i}})\neq 0$ if $p_i^{e_i-1}(p_i-2)\geq 5$. Thus the only candidates of $K_1(\mathcal{A}_{\Theta_i}\rtimes \mathbb{Z}_{p_i^{e_i}})= 0$ are $p_i^{e_i}=3, 3^2, 5$ or $2^k$ where $k>1$. To compute $K_1$-groups, we apply \cite[Theorem 0.1]{Luck}, stated as Theorem \ref{Luck thm}.

Hence the corresponding $K_1$-groups vanish when $p_i^{e_i}=3,5$ and $2^k$ where $k>1$. In the following we only check the case when $p_i^{e_i}=3^2$. The theorem above enables us to only compute $s_1$ where
$$
s_1={\rm rk}_{\mathbb{Z}}((\bigwedge\nolimits ^1\mathbb{Z}^6)^{\mathbb{Z}_9})+{\rm rk}_{\mathbb{Z}}((\bigwedge\nolimits ^3\mathbb{Z}^6)^{\mathbb{Z}_9})+{\rm rk}_{\mathbb{Z}}((\bigwedge\nolimits ^5\mathbb{Z}^6)^{\mathbb{Z}_9}).
$$
We obtain ${\rm rk}_{\mathbb{Z}}((\bigwedge\nolimits ^1\mathbb{Z}^6)^{\mathbb{Z}_9})=0$ for the action is free outside the origin $0 \in \mathbb{Z}^6$. According to our realization we have $\mathbb{Z}_9$ is the cyclic group generalized by $A$ in ${\rm GL}_6(\mathbb{Z})$, where
$$
A=\left( \begin{array}{cccccc}
0&0&0&0&0&-a_0\\
1&0&0&0&0&-a_1\\
0&1&0&0&0&-a_2\\
0&0&1&0&0&-a_3\\
0&0&0&1&0&-a_4\\
0&0&0&0&1&-a_5\end{array} \right)
$$ and
$$\Phi_9(x)=\prod_{\substack{0<k<9\\ \gcd(k,9)=1}} (x-\zeta^k)=\sum_{i=0}^{5}a_i x^i, \zeta=\exp(2\pi i \frac{1}{9}).$$
Write 
$$
A=Q^{-1}{\rm diag}(\zeta, \zeta^2, \zeta^4, \zeta^5, \zeta^7, \zeta^8)Q,
$$
where $Q\in {\rm GL}_6(\mathbb{C})$.
Notice that $x$ a fixed point of $A$ acting on $\bigwedge^3 \mathbb{Z}$ if and only if $Qx$ is a fixed point of ${\rm diag}(\zeta, \zeta^2, \zeta^4, \zeta^5, \zeta^7, \zeta^8)$ acting on $\bigwedge^3 Q\mathbb{Z}$. Then for $1\leq i<j<k\leq 6$, if 
$$
{\rm diag}(\zeta, \zeta^2, \zeta^4, \zeta^5, \zeta^7, \zeta^8) e_i\wedge e_j \wedge e_k =e_i\wedge e_j \wedge e_k ,
$$
it is the same to stating that there is a way of pick three number in ${1,2,4,5,7,8}$ such that the sum of them can be divided by $9$, which is impossible. Thus we obtain
$$
{\rm rk}_{\mathbb{Z}}((\bigwedge\nolimits ^3\mathbb{Z}^6)^{\mathbb{Z}_9})=0.
$$
Similarly we also obtain
$$
{\rm rk}_{\mathbb{Z}}((\bigwedge\nolimits ^5\mathbb{Z}^6)^{\mathbb{Z}_9})=0.
$$
Hence when $p_i^{e_i}=3^2$, we obtain a simple noncommutative 6-torus  $\mathcal{A}_{\Theta}$ on which $\mathbb{Z}_9$ acts in the way we are interested in, and 
$$
K_1(\mathcal{A}_{\Theta}\rtimes \mathbb{Z}_9)= 0,
$$
in other words $\mathcal{A}_{\Theta}\rtimes \mathbb{Z}_9$ is an AF algebra.

We leave the case when $d-W(n)=1$ to next section. To complete the proof, we discuss the case when given the dimension $d$ and the order $n$ such that $W(n)\leq d$, and with $p_1^{e_1}=2$ in the prime factorization of $n=\prod_{i=1}^t p_i^{e_i}$. By proof in \cite{Kuz} notice that $W(\frac{n}{2})=W(n)\leq d$. Then for $\frac{n}{2}$, by applying the companion matrix we could construct a $d_i\times d_i$ matrix $A_i$ of order $p_i^{e_i}$ where $d_i=(p_i-1)p_i^{e_i-1}$ for $i=2,\ldots,t$. Then set $B:=\bigoplus_{i=2}^t A_i$, which is a $W(n)\times W(n)$ matrix with $W(n)\leq d$, and of order $\frac{n}{2}$.

Again we could find nondegenerate $\Theta_i$ in $\mathcal{T}_{d_i}(\mathbb{R})$ such that $\langle A_i\rangle$ acts on $\mathcal{A}_{\Theta_i}$ in the way we described above for $i=2,\ldots,t$. If $d-W(n)=d-W(\frac{n}{2})> 1$, set $A:=B\oplus I_{d-W(n)}$, which is a $d\times d$ matrix of order $\frac{n}{2}$. It is possible to find a nondegenerate $\Theta_{0}$ since $d-W(n)>1$. Then set
$$
\Theta:=\Big(\bigoplus_{i=1}^t\Theta_i\Big)\oplus \Theta_0,
$$
which is nondegenerate. Since
$$
(-A)^t\Theta(-A)=A^t\Theta A=\Theta,
$$
we have an action of $\langle -A\rangle=\mathbb{Z}_n$ on $\mathcal{A}_{\Theta}$. However, in this case we can only write $\mathbb{Z}_n$ as 
$$
\mathbb{Z}_n\cong \Big( \prod_{\substack{2\leq i\leq t\\ i\neq j}}\mathbb{Z}_{p_i^{e_i}} \Big)\times \mathbb{Z}_{2p_j^{e_j}}
$$
where $2\leq j \leq t$. Thus instead of $A$, we consider $A_{(j)}$ as a generator of $\mathbb{Z}_n$ where $A_{(j)}$ is defined by
$$
A_{(j)}=\left( \begin{array}{ccccccc}
A_1&0&\cdots&0&0&0&0\\
0&A_2&\cdots&0&0&0&0\\
\vdots&\vdots&\ddots&\vdots&\vdots&\vdots&\vdots\\
0&0&\cdots&-A_j&0&0&0\\
\vdots&\vdots&\vdots&\vdots&\ddots&\vdots&\vdots\\
0&0&\cdots&0&0&A_t&0\\
0&0&\cdots&0&0&0&I_{d-W(n)}\end{array} \right)
$$
where $1\leq j\leq t$. Then we consider actions of $\mathbb{Z}_n=\langle A_{(j)}\rangle$ on 
$\mathcal{A}_{\Theta}$.

Similarly we have 
$$
\mathcal{A}_{\Theta}\rtimes \mathbb{Z}_n\cong \Big(\bigotimes_{\substack{2\leq i\leq t\\ i\neq j}}(\mathcal{A}_{\Theta_i}\rtimes\mathbb{Z}_{p_i^{e_i}}) \Big)\otimes (\mathcal{A}_{\Theta_j}\rtimes \mathbb{Z}_{2p_j^{e_j}})\otimes\mathcal{A}_{\Theta_0},
$$
where $2\leq j \leq t$.
By the K\"unneth therorem $K_*(\mathcal{A}_{\Theta}\rtimes \mathbb{Z}_n)$ is the tensor of $K_*$ of each factor.
Again since the part of $K_*(\mathcal{A}_{\Theta_0})$, 
$
K_1(\mathcal{A}_{\Theta}\rtimes \mathbb{Z}_n)\neq0,
$
which is equivalently to say that $\mathcal{A}_{\Theta}\rtimes \mathbb{Z}_n$ is not an AF algebra.

It is similar to above when $d-W(n)=d-W(\frac{n}{2})=0$. However we remark that 
$$
K_*(\mathcal{A}_{\Theta}\rtimes \mathbb{Z}_n)\cong \Big(\bigotimes_{\substack{2\leq i\leq t\\ i\neq j}}K_*(\mathcal{A}_{\Theta_i}\rtimes\mathbb{Z}_{p_i^{e_i}}) \Big)\otimes K_*(\mathcal{A}_{\Theta_j}\rtimes \mathbb{Z}_{2p_j^{e_j}})
$$
where $2\leq j \leq t$.
Thus by \cite[Theorem 0.1]{Luck}, we have $K_1(\mathcal{A}_{\Theta_j}\rtimes \mathbb{Z}_{2p_j^{e_i}})=0$ since $2p_j^{e_j}$ is even. Hence in this case, $\mathcal{A}_{\Theta}\rtimes \mathbb{Z}_n$ is an AF-algebra exactly when $\frac{n}{2}$ admits a form of $3^j5^ip_l^{e_l}$ where $j$, $i$ and $e_l$ are all nonnegetive integers, $j\leq 2$, $i\leq 1$ and $p_l>5$ is a prime number. As a summarize we have the following theorem.
\begin{thm}\label{main1}
Given a dimension $d$ and an order $n>1$, there is an action of $\mathbb{Z}_n$ on a simple noncommutative $d$-tori $\mathcal{A}_{\Theta}$ if either $d-W(n)>1$ or $d-W(n)=0$. The crossed product $\mathcal{A}_{\Theta}\rtimes \mathbb{Z}_n$ is an AT algebra, and it is an AF algebra if and only if $d-W(n)=0$, and $n$ either admits a form of $2m$ where $m=3^j5^ip_l^{e_l}$, or admits a form of $2^k3^j5^i$ where $k\neq1$, $j\leq2$, $i\leq1$ and $e_l$ are all nonnegative integers, and $p_l>5$ is a prime number. 

\end{thm}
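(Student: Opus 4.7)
The plan is to reorganise the case analysis of the preceding discussion into a coherent proof, splitting on whether $p_1^{e_1}=2$ in the prime factorisation $n=\prod_{i=1}^{t}p_i^{e_i}$, and then within each case on whether $d-W(n)>1$ or $d-W(n)=0$. In every case the strategy is the same: build a matrix $A\in{\rm GL}_d(\mathbb{Z})$ of order $n$ as a block-diagonal sum of companion matrices of cyclotomic polynomials, invoke \cite[Theorem 4.2]{LEE} to produce a nondegenerate $\Theta_i\in\mathcal{T}_{d_i}(\mathbb{R})$ fixed by the $i$-th block, combine them into a nondegenerate $\Theta$ using the direct-sum proposition, and recognise the crossed product as a tensor product of the individual crossed products by the compatibility of the action with $\otimes$.

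First, when $p_1^{e_1}\ne 2$, I set $B:=\bigoplus_{i=1}^{t}A_i$ where $A_i$ is the $d_i\times d_i$ companion matrix of $\Phi_{p_i^{e_i}}$ with $d_i=(p_i-1)p_i^{e_i-1}$, so $B$ has order $n$ and size $W(n)$. If $d-W(n)>1$ I pad with $I_{d-W(n)}$ and adjoin a nondegenerate block $\Theta_0\in\mathcal{T}_{d-W(n)}(\mathbb{R})$ (which exists precisely because $d-W(n)>1$); if $d-W(n)=0$ no padding is needed. When $p_1^{e_1}=2$ the earlier construction only yields order $n/2$ because $W(n)=W(n/2)$, so I replace one of the odd blocks $A_j$ by $-A_j$ to double the order of that factor to $2p_j^{e_j}$, obtaining the generator $A_{(j)}$ of $\mathbb{Z}_n$ described in the discussion; the identities $(-A_j)^{t}\Theta_j(-A_j)=\Theta_j$ and the proposition on direct sums again produce a nondegenerate $\Theta$.

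The resulting crossed product decomposes as a tensor product of simple, classifiable, unital $C^*$-algebras, each in the bootstrap class $\mathfrak{N}$ with torsion-free $K$-theory (by \cite[Remark 3.2]{LEE}, \cite[Theorem 0.1]{Luck} and, where relevant, by $K_{*}(\mathcal{A}_{\Theta_0})\cong\mathbb{Z}^{2^{d-W(n)-1}}\oplus\mathbb{Z}^{2^{d-W(n)-1}}$). By Theorem \ref{Kunneth} the total $K$-theory is the tensor product, hence torsion-free, and by \cite[Proposition 3.1]{LEE} and \cite[Proposition 3.7]{Phi} (stated earlier) the crossed product is an AT algebra. For the AF characterisation I track $K_1$: when $d-W(n)>1$ the factor $\mathcal{A}_{\Theta_0}$ already has $K_1\ne 0$, so the crossed product is never AF; when $d-W(n)=0$ it is AF iff $K_1(\mathcal{A}_{\Theta_i}\rtimes\mathbb{Z}_{p_i^{e_i}})=0$ for every $i$ (and for the distinguished factor $\mathcal{A}_{\Theta_j}\rtimes\mathbb{Z}_{2p_j^{e_j}}$ in the even case, whose $K_1$ vanishes automatically by Theorem \ref{Luck thm} since $2p_j^{e_j}$ is even). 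Applying \cite[Theorem 3.6]{LEE} rules out odd $p_i^{e_i}$ with $p_i^{e_i-1}(p_i-2)\geq 5$, leaving only $p_i^{e_i}\in\{3,5,2^{k}\ (k>1)\}$ together with the one borderline case $p_i^{e_i}=3^2$.

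The main obstacle is the borderline case $p_i^{e_i}=9$, which is not covered by the clean divisibility inequality and must be verified by hand. Here I use Theorem \ref{Luck thm} to reduce $K_1$ to computing the rank of the $\mathbb{Z}_9$-invariants in $\bigwedge^{2l+1}\mathbb{Z}^{6}$ for $l=0,1,2$. Diagonalising the generator over $\mathbb{C}$ as ${\rm diag}(\zeta,\zeta^2,\zeta^4,\zeta^5,\zeta^7,\zeta^8)$ with $\zeta=\exp(2\pi i/9)$, the question becomes a finite combinatorial check: whether some $(2l+1)$-element subset of $\{1,2,4,5,7,8\}$ has sum divisible by $9$. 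The sizes $1$ and $5$ give no such subset trivially (the total sum is $27$, which is divisible by $9$, but a $5$-element subset misses exactly one element $k\in\{1,\ldots,8\}$ with $27-k\equiv 0\pmod 9$ only when $k=9$, impossible); for size $3$ an inspection of the $\binom{6}{3}=20$ triples shows none sum to a multiple of $9$. Once this is in place the AF list in the statement follows directly: aggregating allowable factors over $i$ yields exactly the forms $2^k 3^j 5^i$ with $k\ne 1,\ j\le 2,\ i\le 1$ in the purely-small-prime case, and $2m$ with $m=3^j5^ip_l^{e_l}$ in the case where one prime power $p_l^{e_l}$ with $p_l>5$ gets absorbed into the doubled factor $\mathbb{Z}_{2p_l^{e_l}}$.
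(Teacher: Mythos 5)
Your proposal is correct and follows essentially the same route as the paper: the same block-diagonal companion-matrix construction (with the $-A_j$ doubling trick when $p_1^{e_1}=2$), the same K\"unneth reduction of $K_*$ to the tensor factors, the same elimination via \cite[Theorem 3.6]{LEE}, and the same hand check of the borderline case $p_i^{e_i}=9$ by the subset-sum argument in $\{1,2,4,5,7,8\}$ modulo $9$. (Your formula $K_*(\mathcal{A}_{\Theta_0})\cong\mathbb{Z}^{2^{d-W(n)-1}}\oplus\mathbb{Z}^{2^{d-W(n)-1}}$ is in fact the correct rank count for a noncommutative $(d-W(n))$-torus, where the paper's displayed exponent appears to contain a typo; both versions give $K_1\neq 0$, so the conclusion is unaffected.)
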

\begin{cor}\label{iff2}
Given a dimension $d$ which is even and order $n$, there is an action of $\mathbb{Z}_n$ on simple noncommutative $d$-tori if and only if $W(n)\leq d$.
\end{cor}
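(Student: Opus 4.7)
The plan is to obtain Corollary \ref{iff2} as a clean consequence of Theorem \ref{main1}, by ruling out the single unresolved case $d - W(n) = 1$ whenever $d$ is even. The ``only if'' direction I would dispose of in one line: any canonical action of $\mathbb{Z}_n$ on a noncommutative $d$-torus $\mathcal{A}_{\Theta}$ is implemented by an order-$n$ element of $G_{\Theta}\leq {\rm GL}_d(\mathbb{Z})$, so Kuzmanovich's theorem (cited at the beginning of Section 3) immediately forces $W(n)\leq d$.

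For the ``if'' direction, the key step is a simple but essential parity statement: $W(n)$ is always even. I would verify this by a short case analysis on the prime factorization $n=\prod_{i=1}^{t}p_i^{e_i}$ with $p_1<\cdots<p_t$. When every $p_i$ is odd, each term $(p_i-1)p_i^{e_i-1}$ is even, and so is their sum. When $p_1=2$ with $e_1\geq 2$, the leading term equals $2^{e_1-1}$, still even, while the remaining terms are even for the same reason. The only delicate case is $p_1^{e_1}=2$, where the leading term equals $1$ and the remaining terms are even, so the unadjusted sum is odd; but in exactly this case the definition of $W(n)$ subtracts an additional $1$, restoring even parity.

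With parity in hand, the corollary follows immediately. Under the hypothesis that $d$ is even, the inequality $W(n)\leq d$ forces $d-W(n)$ to be a non-negative even integer, which leaves only the alternatives $d-W(n)=0$ or $d-W(n)\geq 2$. Both fall inside the existence range of Theorem \ref{main1}, so the required simple noncommutative $d$-torus and $\mathbb{Z}_n$-action are supplied by that theorem. The main (and essentially only) obstacle is the parity verification, and even that is routine: the $-1$ correction in the definition of $W(n)$ is precisely what is needed to absorb the single odd summand that appears in the case $p_1^{e_1}=2$, so the exceptional case $d-W(n)=1$ treated in Section 4 never arises under the even-dimensional hypothesis.
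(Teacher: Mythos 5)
Your proposal is correct and follows essentially the same route as the paper: the paper's entire proof is the observation that $W(n)$ is always even, which is exactly the parity argument you spell out, combined with the existence statement of Theorem \ref{main1} and Kuzmanovich's criterion for the converse. You simply make explicit the case analysis and the ``only if'' direction that the paper leaves implicit.
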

\begin{proof}
Since $W(n)$ is always even when $n\geq 3$.
\end{proof}

Note that we have the following example which is already mentioned in \cite{Echterhorff}.
\begin{example}
Consider an action of $\mathbb{Z}_n$ on a simple $2$-dimensional noncommutative torus , say $\mathcal{A}_{\Theta}$. Suppose that the action is of the above type. By Corollary \ref{iff2}, it is equivalent to say that $d-W(n)=0$ or  $d-W(n)=2$ where $d=2$. Then by knowledge of $W(n)$, we obtain $n=2,3,4$ or $6$. Moreover by Theorem \ref{main1}, each crossed product $\mathcal{A}_{\Theta}\rtimes \mathbb{Z}_n$ is an AF algebra.
\end{example}

Then generally for a finite abelian group $G$, we may generalize our method of construct such actions of $G$ on noncommutative tori. 
By the structure theorem for finitely generated abelian groups we write $G$ as
$$
G\cong  \prod_{i=1}^{s}\mathbb{Z}_{p_i^{e_i}}
$$
where $p_1\leq\cdots\leq p_s$ are primes and $e_i\leq e_{i+1}$ whenever $p_i=p_{i+1}$. However we cannot simply define $W(G):=\sum_{i=1}^sW(p_i^{e_i})$. Since if so, suppose the case when $G=\mathbb{Z}_2\times\mathbb{Z}_2$, in which we have $W(G)=0$, but we can only realize it in ${\rm GL}_4(\mathbb{Z})$ as $\langle A, B\rangle$ where
$$
A=\left( \begin{array}{cccc}
-1&0&0&0\\
0&-1&0&0\\
0&0&1&0\\
0&0&0&1\end{array} \right)\ {\rm and }\ 
B=\left( \begin{array}{cccc}
1&0&0&0\\
0&1&0&0\\
0&0&-1&0\\
0&0&0&-1\end{array} \right).
$$
This is because if for a finite abelian group $G\cong \prod_{i=1}^{s}\mathbb{Z}_{p_i^{e_i}}$, if $\mathbb{Z}_2$ is a normal group of it, i.e. $p_1^{e_1}=2$, and there is no other $p_i^{e_i}$ such that $p_i^{e_i}\neq 2$, then we cannot write $\mathbb{Z}_2$ into $\mathbb{Z}_{2p_i^{e_i}}$ as a normal subgroup, where $\mathbb{Z}_{2p_i^{e_i}}$ is a normal subgroup of $G$. Thus we cannot realize $\mathbb{Z}_2$ as the signature part of the matrix which need not a cost of a dimension, and since the determinant of realized matrix is $1$, the degree should be even. Thus instead we have to cost two of dimensions to set a diagonal part as example above. Hence in this case we should modify our test funtion to taking value $2$ of $\mathbb{Z}_2$.

\begin{definition}
For a cyclic group $\mathbb{Z}_n$, define the function $W$ as follows. 
$$
W(\mathbb{Z}_n):= \begin{cases}

W(n), & n\neq 2;\\
2,& n=2.
\end{cases}
$$
\end{definition}

\begin{definition}
For a finitely generated abelian group $G$, by the structure theorem for finitely generated abelian groups, we have 
$$
G\cong \Big( \prod_{i=1}^{s}\mathbb{Z}_{p_i^{e_i}} \Big)\times \mathbb{Z}^r,
$$ 
where $p_1\leq\cdots\leq p_s$ are primes and $e_i\leq e_{i+1}$ whenever $p_i=p_{i+1}$. Let $G_{\rm tor}$ denote the torsion part  $G_{\rm tor}$.

Define the function $W$ of $G$ as
$$
W(G):=\min \Big\{\sum_{l=1}^t W(\mathbb{Z}_{n_l})\mid G_{\rm tor}\cong \prod_{j=1}^{l}\mathbb{Z}_{n_j}\Big\}.
$$

\end{definition}
\begin{thm} \label{3.5}
For a given dimension $d$ and an abelian finite subgroup $G\leq {\rm GL}_d(\mathbb{Z})$ with $d-W(G)>1$ or $d-W(G)=0$, there is an action of $G$ on  a simple noncommutative $d$-torus $\mathcal{A}_{\Theta}$. The crossed product $\mathcal{A}_{\Theta}\rtimes G$ is an AT algebra, and it is an AF algebra if and only if $d-W(G)=0$ and, if we write $G$ as $G\cong \prod_{j=1}^t\mathbb{Z}_{n_j}$ such that 
$$
W(G)=\sum_{l=1}^t W(\mathbb{Z}_{n_l}),
$$
each $n_l$ either admits a form of $2m$ where $m=3^j5^ip_r^{e_r}$, or admits a form of $2^k3^j5^i$ where $k\neq1$, $j\leq2$, $i\leq1$ and $e_r$ are all nonnegative integers, and $p_r>5$ is a prime number. 
\end{thm}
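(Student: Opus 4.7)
The plan is to reduce Theorem \ref{3.5} to the cyclic case handled in Theorem \ref{main1}, via the tensor-product construction recorded at the start of Section 3 together with the K\"unneth theorem (Theorem \ref{Kunneth}). I would begin by writing $G \cong \prod_{j=1}^{t} \mathbb{Z}_{n_j}$ in a way that achieves the minimum in the definition of $W(G)$, so that $W(G) = \sum_{j=1}^{t} W(\mathbb{Z}_{n_j})$; such a decomposition exists by definition. For each $j$, the construction used in the proof of Theorem \ref{main1} produces a matrix $B_j \in \mathrm{GL}_{W(\mathbb{Z}_{n_j})}(\mathbb{Z})$ generating a copy of $\mathbb{Z}_{n_j}$, together with a nondegenerate $\Theta_j \in \mathcal{T}_{W(\mathbb{Z}_{n_j})}(\mathbb{R})$ with $B_j^t \Theta_j B_j = \Theta_j$; when $n_j = 2$ the recipe is $B_j = -I_2$, costing the two dimensions of $W(\mathbb{Z}_2) = 2$.

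Next, I would set
$$
B := \Bigl( \bigoplus_{j=1}^{t} B_j \Bigr) \oplus I_{d - W(G)}, \qquad \Theta := \Bigl( \bigoplus_{j=1}^{t} \Theta_j \Bigr) \oplus \Theta_0,
$$
where $\Theta_0$ is any nondegenerate element of $\mathcal{T}_{d - W(G)}(\mathbb{R})$ (available since $d - W(G) \neq 1$); in the case $d - W(G) = 0$ the trailing blocks are simply dropped. The block-diagonal form, combined with commutativity of $G$, ensures that the subgroups $\langle B_j \rangle$ assemble to an action of $G$ on the simple algebra $\mathcal{A}_\Theta$, and the tensor formula from the beginning of Section 3 gives
$$
\mathcal{A}_\Theta \rtimes G \;\cong\; \Bigl( \bigotimes_{j=1}^{t} (\mathcal{A}_{\Theta_j} \rtimes \mathbb{Z}_{n_j}) \Bigr) \otimes \mathcal{A}_{\Theta_0}.
$$
By Theorem \ref{main1} each cyclic tensor factor is an AT algebra lying in the bootstrap class $\mathfrak{N}$ with torsion-free $K$-theory, and the same holds for $\mathcal{A}_{\Theta_0}$; iterating Theorem \ref{Kunneth} shows $K_*(\mathcal{A}_\Theta \rtimes G)$ is the tensor product of the individual $K$-groups and so is again torsion free. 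Combined with simplicity, the UCT, nuclearity, separability, unitality and tracial rank zero (all inherited as in the cyclic case), the classification theorem stated in the Preliminaries promotes $\mathcal{A}_\Theta \rtimes G$ to a classifiable AT algebra.

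For the AF characterization I would use that a classifiable simple algebra with torsion-free $K$-theory is AF iff $K_1 = 0$. By K\"unneth, the $K_1$ of the tensor product vanishes iff the $K_1$ of every factor vanishes. Since $K_1(\mathcal{A}_{\Theta_0}) \cong \mathbb{Z}^{2^{d - W(G)} - 1} \neq 0$ whenever the trailing block is present, this forces $d - W(G) = 0$; the AF part of Theorem \ref{main1} then identifies when $K_1(\mathcal{A}_{\Theta_j} \rtimes \mathbb{Z}_{n_j}) = 0$, giving exactly the arithmetic condition stated on each $n_j$.

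The main obstacle is the bookkeeping around $\mathbb{Z}_2$ factors, where $W(\mathbb{Z}_2) = 2$ while the raw Kuzmanovich quantity $W(2) = 0$. The minimum in the definition of $W(G)$ matters precisely because absorbing a $\mathbb{Z}_2$ factor into a companion cyclic factor $\mathbb{Z}_m$ to form $\mathbb{Z}_{2m}$ often reduces the cost compared to leaving the $\mathbb{Z}_2$ separate, which would otherwise consume two fresh dimensions. I would have to track these absorptions when choosing the decomposition and building $B$ (following the $-A$ trick in the second half of Section 3), and argue that the statement of the AF condition in terms of a \emph{minimizing} decomposition is well posed, since the tensor formula for the crossed product, and hence its $K$-theoretic AF obstruction, depends only on the isomorphism types of the cyclic factors appearing in whatever minimizing decomposition one chooses.
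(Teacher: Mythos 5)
Your proposal is correct and follows essentially the same route as the paper: choose a decomposition of $G$ realizing the minimum in $W(G)$, apply the cyclic-case construction of Theorem \ref{main1} to each factor $\mathbb{Z}_{n_j}$ (with the $-I_2$ block for $n_j=2$ and a nondegenerate $\Theta_0$ absorbing the remaining $d-W(G)\neq 1$ dimensions), tensor the resulting actions, and read off the AT/AF dichotomy from the K\"unneth theorem. Your version is in fact more explicit than the paper's, which compresses the K-theoretic bookkeeping into ``the rest of the proof is similar to the one in section 3,'' and your remark on the well-posedness of the minimizing decomposition flags a point the paper leaves implicit.
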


\begin{proof}
By definition we can write $G$ as $G\cong \prod_{j=1}^t\mathbb{Z}_{n_l}$ such that 
$$
W(G)=\sum_{l=1}^t W(\mathbb{Z}_{n_l}).
$$
Thus we have $d-W(\mathbb{Z}_{n_l})>1$ or $d-W(\mathbb{Z}_{n_l})=0$ for all $l$. Then for each $j$ by Theorem \ref{main1} we obtain a simple noncommutative torus $\mathcal{A}_{\Theta_l}$ on which $\mathbb{Z}_{n_l}$ acts in the above way. We emphasis that if $n_l=2$, we obtain an irrational rotation algebra. Then by the tensor product argument similar in section 3 we obtain a simple noncommutative $d$-torus $\mathcal{A}_{\Theta}$ on which $G$ acts in the above way.

The rest of the proof is similar to the one in section 3 and from Theorem \ref{main1}.
\end{proof}

\begin{cor} \label{cor}
The only actions of finite abelian group on irrational rotation algebras of the above type are by $\mathbb{Z}_k$ where $k=2,3,4$ and $6$.
\end{cor}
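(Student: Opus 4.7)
The plan is to specialize Theorem \ref{3.5} to $d = 2$, since an irrational rotation algebra is a simple noncommutative $2$-torus. By that theorem, a finite abelian group $G$ admits a canonical action on such $\mathcal{A}_\Theta$ only if $d - W(G) > 1$ or $d - W(G) = 0$; with $d = 2$ this forces $W(G) \in \{0, 2\}$.

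The first ingredient I would record is a small observation: for every $n \geq 2$ one has $W(\mathbb{Z}_n) \geq 2$, and $W(\mathbb{Z}_n)$ is always even. The lower bound is immediate from the definition ($W(\mathbb{Z}_2) = 2$ by fiat, and for $n \geq 3$ any single prime-power factor already contributes at least $2$ even after the $-1$ correction in the case $p_1^{e_1} = 2$). Evenness follows from a short case check on Kuzmanovich's formula, using that $(p-1)p^{e-1}$ is even whenever $p$ is odd or $e \geq 2$; the only odd summand $(2-1)\cdot 2^0 = 1$ occurs precisely in the case $p_1^{e_1} = 2$, where it is cancelled by the $-1$ correction. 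Since $W(G)$ is by definition a minimum of finite sums of values $W(\mathbb{Z}_{n_l})$, it too is always even.

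From this, $W(G) \leq 2$ immediately forces $G$ to be cyclic. Indeed, if one fixes a decomposition $G \cong \prod_l \mathbb{Z}_{n_l}$ realizing the minimum defining $W(G)$, then having two factors with $n_l \geq 2$ would already make the sum at least $4$. Hence at most one cyclic factor is nontrivial, so either $G$ is trivial (the degenerate case $W(G) = 0$, which one excludes since the corollary concerns nontrivial group actions) or $G \cong \mathbb{Z}_n$ with $W(\mathbb{Z}_n) = 2$.

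It then remains to enumerate the integers $n$ with $W(\mathbb{Z}_n) = 2$. The value $n = 2$ is covered by definition. For $n \geq 3$ I would split on whether $p_1^{e_1} = 2$ in the prime factorization of $n$: if not, then $W(n) = \sum_i (p_i-1)p_i^{e_i-1} = 2$ forces $n$ to be a single prime power with $(p-1)p^{e-1} = 2$, yielding $n = 3$ or $n = 4$; if so, then $W(n) = \sum_{i\geq 2}(p_i-1)p_i^{e_i-1} = 2$ forces a single further factor $(3-1)\cdot 1 = 2$, yielding $n = 6$. No larger $n$ solves the equation, so $G \cong \mathbb{Z}_k$ with $k \in \{2, 3, 4, 6\}$. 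The whole argument is a short numerical enumeration riding on Theorem \ref{3.5}, with no serious obstacle beyond keeping the parity and minimality conventions straight.
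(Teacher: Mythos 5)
Your proposal is correct and follows essentially the same route as the paper: specialize to $d=2$, use the evenness and lower bound of $W$ to force $W(G)=2$, conclude $G$ is cyclic from the minimality in the definition of $W(G)$, and enumerate the $n$ with $W(\mathbb{Z}_n)=2$. You simply spell out the parity check and the enumeration that the paper compresses into "by the definition of the function $W$."
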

\begin{proof}
Note that in the current case $d=2$. suppose $G$ is a finite abelian group acting on the irrational rotation algebra $\mathcal{A}_{\theta}$ in the above way. It is a conflict if $W(G)>2$ then by our statement above. Thus $W(G)=2$. Then by the definition of the function $W$ we have $G=\mathbb{Z}_k$ where $k=2, 3, 4$ and $6$.
\end{proof}

\section{Case $d-W(n)=1$ and actions on odd-dimensional noncommutative tori}
As a remaining problem of the last section, we need to study the case $d-W(n)=1$ for a given dimension $d$ and order $n=\prod_{i=1}^{t} p_i^{e_i}$ with primes $p_1<\cdots<p_t$. We start our discussion again from Kuzmanovich's result which shows that for any $A\in {\rm GL}_d(\mathbb{Z})$ of order $n$, there is $Q\in {\rm GL}_d(\mathbb{Q})\subseteq {\rm GL}_d(\mathbb{R})$ such that
$$
\Lambda:=QAQ^{-1}=\left( \begin{array}{ccccc}
A_1&0&\cdots&0&0\\
0&A_2&\cdots&0&0\\
\vdots&\vdots&\ddots&\vdots&\vdots\\
0&0&\cdots&A_t&0\\
0&0&\cdots&0&1\end{array} \right)
$$
where $A_i$ is the companion matrix for $p_i^{e_i}$.

In general for a subgroup $G\leq {\rm GL}_d(\mathbb{Z})$ and an element $Q\in{\rm GL}_d(\mathbb{R})$ with $QGQ^{-1}\leq {\rm GL}_d(\mathbb{Z})$, it is routine to check that $G\leq G_{\Theta_1}$ if and only if $
QGQ^{-1}\leq G_{\Theta_2}
$
where $\Theta_2:=(Q^{-1})^t\Theta_1 Q^{-1}$.

Consider a category $\mathfrak{G}$ defined as following: an object is a pair $(G, \omega)$ where $G$ is a discete group and $\omega$ is a 2-cocycle in $Z(G,\mathbb{T})$. For objects $(G_1, \omega_1)$ and $(G_2, \omega_2)$,
$$
{\rm Hom}_{\mathfrak{G}}((G_1, \omega_1),(G_2, \omega_2)):=\{\varphi: G_1\rightarrow G_2\mid \omega_2\circ \varphi=\omega_1\}.
$$
If we denote by $\mathfrak{C}$ the category of $C^*$-algebras and define
$$
C^*(\cdot, \cdot)(G,\omega):=C^*(G,\omega),
$$
$$
C^*(\cdot, \cdot)\varphi: \ell^1(G_2,\omega_2)\rightarrow \ell^1(G_1,\omega_1), f\mapsto f\circ\varphi.
$$

By the universal property of a full twisted $C^*$-algebra, one may check that $C^*(\cdot, \cdot)$ is a functor from $\mathfrak{G}$ to $\mathfrak{C}$.

Thus for arbitrary $\Theta_1\in \mathcal{T}_d(\mathbb{R})$ and $Q\in {\rm GL}_d(\mathbb{R})$, we have $\Theta_2:=Q^t\Theta_1Q\in \mathcal{T}_d(\mathbb{R})$. By taking different generators we can obtain an isomorphism 
 $$Q^{-1}: \mathbb{Z}^d\rightarrow\mathbb{Z}^d,$$  
moreover
$$
\omega_{\Theta_2}\circ Q^{-1}(x,y)=\exp (\pi i\langle \Theta_2 Q^{-1}x,Q^{-1}y\rangle)=\exp (\pi i\langle \Theta_1 x,y\rangle)=\omega_{\Theta_1}(x,y).
$$
Thus we obtain
$$
\mathcal{A}_{\Theta_1}=C^*(\mathbb{Z}^d, \omega_{\Theta_1})\cong C^*(\mathbb{Z}^d, \omega_{\Theta_2})=\mathcal{A}_{\Theta_2}.
$$

Since $\mathcal{T}_{d, A}(\mathbb{R})=\mathcal{T}_{d, Q^{-1}\Lambda Q}(\mathbb{R})=Q^t\mathcal{T}_{d, \Lambda}(\mathbb{R})Q$, and we have the following preposition in general.

\begin{prop}
For $\Lambda\in{\rm GL}_d(Z)$ where
$$
\Lambda=\left( \begin{array}{cccc}
A_1&0&\cdots&0\\
0&A_2&\cdots&0\\
\vdots&\vdots&\ddots&\vdots\\
0&0&\cdots&A_t\end{array} \right)
$$
where each $A_i$ is the companion matrix for $p_i^{e_i}$ of degree $d_i$, and for $\Theta\in\mathcal{T}_{d, \Lambda}(\mathbb{R})$, then 
$$
\Theta=\left( \begin{array}{cccc}
\Theta_{11}&\Theta_{12}&\cdots&\Theta_{1t}\\
-\Theta_{12}^t&\Theta_{22}&\cdots&\Theta_{2t}\\
\vdots&\vdots&\ddots&\vdots\\
-\Theta_{1t}^t&-\Theta_{2t}^t&\cdots&\Theta_{tt}\end{array} \right)
$$
where for $i\leq j$, $\Theta_{ij}$ is a $d_i\times d_j$ matrix , and
$
\Theta_{ij}=(\theta_{kl})
$
such that $\theta_{kl}=\theta_{(k+1)(l+1)}$ for all $k,l$.
\end{prop}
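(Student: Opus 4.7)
The plan is to reduce the invariance condition $\Lambda^t\Theta\Lambda=\Theta$ to a block-by-block statement and then read off the Toeplitz-type shift relation directly from the explicit form of the companion matrices $A_i$.

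First I would decompose $\Theta$ as a $t\times t$ block matrix with $\Theta_{ij}$ of size $d_i\times d_j$, matching the block-diagonal structure of $\Lambda=\mathrm{diag}(A_1,\dots,A_t)$. A direct computation of the block product shows that $(\Lambda^t\Theta\Lambda)_{ij}=A_i^t\Theta_{ij}A_j$, so the single global identity $\Lambda^t\Theta\Lambda=\Theta$ decouples into the system of block identities
\begin{equation*}
A_i^t\,\Theta_{ij}\,A_j=\Theta_{ij}\qquad(1\le i,j\le t).
\end{equation*}
Since $\Theta$ is skew-symmetric, $\Theta_{ji}=-\Theta_{ij}^t$ for $i<j$ and $\Theta_{ii}$ itself is skew-symmetric; this already gives the triangular form stated in the proposition, so it only remains to establish the entrywise shift property of each individual block $\Theta_{ij}$.

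Next I would fix $i,j$ and work out $(A_i^t\Theta_{ij}A_j)_{kl}=(A_ie_k^{(i)})^t\,\Theta_{ij}\,(A_je_l^{(j)})$, using the defining property of the companion matrix: $A_i e_m^{(i)}=e_{m+1}^{(i)}$ for $m<d_i$, while $A_i e_{d_i}^{(i)}$ is the prescribed linear combination of $e_1^{(i)},\dots,e_{d_i}^{(i)}$ determined by the coefficients of $\Phi_{p_i^{e_i}}$. Hence for any pair of indices $k<d_i$ and $l<d_j$ we get the clean identity $(A_i^t\Theta_{ij}A_j)_{kl}=(\Theta_{ij})_{(k+1)(l+1)}$. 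Setting this equal to $(\Theta_{ij})_{kl}$ gives exactly the asserted shift relation $\theta_{kl}=\theta_{(k+1)(l+1)}$ for all indices in the valid range.

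The only subtle point, and the step I expect to be the real obstacle, is the bookkeeping for the boundary rows and columns $k=d_i$ or $l=d_j$: there the shift of $A_i$ or $A_j$ picks up the coefficients of the corresponding cyclotomic polynomial, producing linear relations between the last row/column of $\Theta_{ij}$ and the previously determined entries. One should check that these extra equations are automatically consistent with the shift relation (equivalently, they simply determine the ``boundary'' entries from the ``interior'' ones and impose no further relation on the free parameters), so that the full invariance $A_i^t\Theta_{ij}A_j=\Theta_{ij}$ is equivalent to the Toeplitz shift relation. Combining this with skew-symmetry across blocks yields exactly the matrix form claimed.
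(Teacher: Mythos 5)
Your proposal is correct and follows essentially the same route as the paper: the paper likewise reduces $\Lambda^t\Theta\Lambda=\Theta$ to the block identities $A_i^t\Theta_{ij}A_j=\Theta_{ij}$ and then invokes \cite[Lemma 4.3]{LEE} for the shift relation, which is exactly the companion-matrix computation $(A_i^t\Theta_{ij}A_j)_{kl}=(\Theta_{ij})_{(k+1)(l+1)}$ that you carry out. Note only that the boundary cases $k=d_i$ or $l=d_j$ require no consistency check for this statement, since the proposition asserts the shift relation merely as a necessary condition on $\Theta$, not as a characterization of $\mathcal{T}_{d,\Lambda}(\mathbb{R})$.
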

\begin{proof}
Write $\Theta=(\Theta_{ij})$ then by $\Lambda^t\Theta\Lambda=\Theta$ we obtain
$$
A_i^t\Theta_{ij}A_j=\Theta_{ij}.
$$
Then by a similar proof to the one in \cite[Lemma 4.3]{LEE}, we maintain the form of $\Theta_{ij}$.
\end{proof}
\begin{prop}
$\Theta\in\mathcal{T}_{d, \Lambda}(\mathbb{R})$ has the following form,
$$
\Theta=\left( \begin{array}{cc}
\Theta'&0\\
0&0\end{array} \right)
$$
for some $\Theta' \in \mathcal{T}_{d-1}(\mathbb{R})$.
\end{prop}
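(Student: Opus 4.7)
The plan is to split $\Theta$ into blocks matching the block decomposition of $\Lambda$ and then exploit the invariance relation $\Lambda^t \Theta \Lambda = \Theta$ together with a simple eigenvalue argument.

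First I would write
$$
\Theta = \begin{pmatrix} \Theta' & v \\ -v^t & \theta_0 \end{pmatrix},
$$
where $\Theta' \in \mathcal{T}_{d-1}(\mathbb{R})$ is the block corresponding to $\operatorname{diag}(A_1,\ldots,A_t)$, the column $v \in \mathbb{R}^{d-1}$ is further partitioned as $v = (v_1^t, \ldots, v_t^t)^t$ with $v_i \in \mathbb{R}^{d_i}$, and $\theta_0 \in \mathbb{R}$. Skew-symmetry of $\Theta$ forces $\theta_0 = 0$ immediately, since a $1\times 1$ real skew-symmetric matrix vanishes.

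Next I would expand $\Lambda^t \Theta \Lambda = \Theta$ block-by-block. The upper-left $(d-1)\times(d-1)$ portion is already handled by the preceding proposition and gives exactly the skew-symmetric block $\Theta'$; we do not need more than that here. The essential new information comes from the $(i, t+1)$ blocks for $i = 1, \ldots, t$, where the identity reduces to $A_i^t v_i \cdot 1 = v_i$, i.e.\ $A_i^t v_i = v_i$.

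The heart of the argument is to show $v_i = 0$. Since $A_i$ is the companion matrix of the cyclotomic polynomial $\Phi_{p_i^{e_i}}(x)$, its eigenvalues are precisely the primitive $p_i^{e_i}$-th roots of unity. Because $p_i^{e_i} \geq 2$, none of these roots equals $1$, so $1$ is not an eigenvalue of $A_i$, and hence not of $A_i^t$ either. Therefore $A_i^t - I$ is invertible over $\mathbb{R}$, which forces $v_i = 0$ for every $i$, giving $v = 0$ and thus the claimed block form. The remaining $(t+1,j)$ blocks are automatically zero by the skew-symmetry we have just enforced. The main (and really only) obstacle is this eigenvalue step, but it is immediate once one remembers that the roots of $\Phi_{p_i^{e_i}}$ avoid $1$; no serious case analysis is needed, since the argument is uniform in $p_i^{e_i}$ (including $p_1^{e_1} = 2$, whose unique primitive root $-1$ is still not $1$).
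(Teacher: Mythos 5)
Your proof is correct and follows essentially the same route as the paper: both reduce the claim to the relation $A_i^t v_i = v_i$ (the paper phrases it as $\Theta_{d_i}A_i = \Theta_{d_i}$ for the last row) and then kill $v_i$ using the fact that $1$ is not a root of $\Phi_{p_i^{e_i}}$. The only difference is cosmetic: where you invoke invertibility of $A_i^t - I$ via the eigenvalues of the companion matrix, the paper computes entrywise that all components of the row are equal and then uses $\Phi_{p_i^{e_i}}(1) = p_i \neq 0$ to force them to vanish.
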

\begin{proof}
We write $\Theta_{d_i}:=(\theta_{dk})$ where $\sum_{l=1}^{i-1}d_l<k\leq\sum_{l=1}^{i}d_l$. Then for
$
\Lambda^t\Theta\Lambda=\Theta,
$
we have 
$$
\Theta_{d_i}A_i=\Theta_{d_i}.
$$
Thus it suffices to show when $i=1$. Since $A_1$ is the companion matrix for $p_1^{e_1}$, we have 
$$
(\theta_{d,1}, \theta_{d,2},\ldots, \theta_{d,d_1-1}, \theta_{d,d_1})=(\theta_{d,2}, \theta_{d,3},\ldots, \theta_{d,d_1}, -\sum_{l=0}^{d_1-1}a_l\theta_{d,l+1})
$$
where
$$
\sum_{l=0}^{d_1-1}a_lx^l=\Phi_{p_1^{e_1}}(x)=\sum_{k=0}^{p_i-1}x^{ip_i^{e_i-1}}.
$$
Hence we have $\theta:=\theta_{d,1}=\cdots=\theta_{d,d_1}$ and 
$$
-\Big(\sum_{l=0}^{d_1-1}a_l\Big)\theta=\theta,
$$
thus $\theta=0$ and then $\Theta_{d_i}=0$.
\end{proof}

Thus for any $d$-dimensional noncommutative torus $\mathcal{A}_{\Theta}$ on which $\mathbb{Z}_n=\langle A\rangle$ acts, we have 
$
\Theta=Q^t\Theta''Q
$
where
$$
\Theta'':=\left( \begin{array}{cc}
\Theta'&0\\
0&0\end{array} \right).
$$ 
Then we have  $\mathcal{A}_{\Theta}\cong\mathcal{A}_{\Theta''}\cong\mathcal{A}_{\Theta'}\otimes C(\mathbb{T})$, which is clearly not simple.

Hence we have the following theorem.
\begin{thm} \label{no}
For a given dimension $d$ and order $n$, if $d-W(n)=1$, then there is no simple noncommutative tori on which $\mathbb{Z}_n$ acts in the above way.
\end{thm}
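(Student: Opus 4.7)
The plan is to prove this by contradiction, combining Kuzmanovich's normal form with the structural constraints on $\Theta$ derived from the two preceding propositions. Assume for contradiction that $\mathbb{Z}_n = \langle A \rangle$ acts canonically on a simple noncommutative $d$-torus $\mathcal{A}_\Theta$, so $A \in \mathrm{GL}_d(\mathbb{Z})$ has order $n$ and satisfies $A^t \Theta A = \Theta$. The key is that when $d - W(n) = 1$, the Kuzmanovich normal form of $A$ carries a single trailing $1$ on the diagonal, and this forces a nontrivial kernel vector for $\Theta$ in $\mathbb{Z}^d$.

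First I would invoke Kuzmanovich's normal form recalled at the beginning of the section: there exists $Q \in \mathrm{GL}_d(\mathbb{Q})$ with $\Lambda := Q A Q^{-1}$ block-diagonal, consisting of the companion matrices $A_1, \ldots, A_t$ of $\Phi_{p_1^{e_1}}, \ldots, \Phi_{p_t^{e_t}}$ together with a single $1 \times 1$ identity block at the end, since the size of the identity tail is exactly $d - W(n) = 1$. Setting $\Theta'' := (Q^{-1})^t \Theta Q^{-1} \in \mathcal{T}_d(\mathbb{R})$, the invariance $A^t \Theta A = \Theta$ transforms into $\Lambda^t \Theta'' \Lambda = \Theta''$. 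I would then apply the second preceding proposition (adapted for the trailing identity block, exactly as in the discussion immediately before the theorem) to conclude that the last row and column of $\Theta''$ vanish, so $\Theta'' = \begin{pmatrix} \Theta' & 0 \\ 0 & 0 \end{pmatrix}$ for some $\Theta' \in \mathcal{T}_{d-1}(\mathbb{R})$; the crucial input is that $\Phi_{p_i^{e_i}}(1) = p_i \neq 0$ for each prime-power block, forcing the corresponding parts of the last row of $\Theta''$ to be zero, while the single entry sitting in the identity column is killed by skew-symmetry.

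Finally I would transfer the degeneracy back to $\Theta$ to contradict simplicity. Let $v := Q^{-1} e_d \in \mathbb{Q}^d \setminus \{0\}$, where $e_d$ denotes the last standard basis vector; since the last column of $\Theta''$ vanishes, $\Theta v = Q^t \Theta'' (Qv) = Q^t \Theta'' e_d = 0$. Clearing denominators yields a nonzero $v' \in \mathbb{Z}^d$ with $\Theta v' = 0$, so $\exp(2\pi i \langle \Theta v', y \rangle) = 1$ for every $y \in \mathbb{Z}^d$. This contradicts the nondegeneracy of $\Theta$, hence the simplicity of $\mathcal{A}_\Theta$ by the simplicity criterion of Phillips recalled in Section~2. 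The main subtlety to watch is that $Q$ lies only in $\mathrm{GL}_d(\mathbb{Q})$; rather than chasing the direct $C^*$-algebraic isomorphism $\mathcal{A}_\Theta \cong \mathcal{A}_{\Theta'} \otimes C(\mathbb{T})$ suggested heuristically, which is cleanest for integer changes of basis, it is safer to pull a kernel vector of $\Theta''$ back through $Q$ and clear denominators, since that is all the nondegeneracy criterion actually requires.
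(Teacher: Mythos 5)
Your proposal is correct and follows the same route as the paper: Kuzmanovich's rational normal form $\Lambda = QAQ^{-1}$ with a single trailing $1$, followed by the observation that $\Lambda^t\Theta''\Lambda = \Theta''$ together with $\Phi_{p_i^{e_i}}(1) = p_i \neq 0$ forces the last row and column of $\Theta'' = (Q^{-1})^t\Theta Q^{-1}$ to vanish. The only genuine divergence is in the final step: the paper concludes by asserting $\mathcal{A}_{\Theta} \cong \mathcal{A}_{\Theta''} \cong \mathcal{A}_{\Theta'} \otimes C(\mathbb{T})$, which rests on treating $Q^{-1}$ as an automorphism of $\mathbb{Z}^d$ even though $Q$ is only in $\mathrm{GL}_d(\mathbb{Q})$, whereas you sidestep this by pulling the kernel vector $Q^{-1}e_d$ back to $\Theta$, clearing denominators, and contradicting nondegeneracy directly via Phillips' simplicity criterion. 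Your ending is the more careful one, since degeneracy of $\Theta$ is a purely linear-algebraic statement that survives a rational change of basis without any claim about the $C^*$-algebras themselves.
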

\begin{cor}\label{iff}
For a given dimension $d$ and order $n$, there is a action of $\mathbb{Z}_n$ on a simple noncommutative $d$-torus $\mathcal{A}_{\Theta}$ if and only if either $d-W(n)>1$ or $d-W(n)=0$. 
\end{cor}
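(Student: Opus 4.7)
The plan is to assemble this as a direct dichotomy using what has already been established. The forward implication is nothing more than Theorem \ref{main1}, which explicitly constructs a simple $\mathcal{A}_\Theta$ carrying a $\mathbb{Z}_n$-action whenever $d-W(n)>1$ or $d-W(n)=0$. So the only real content is the reverse implication: assuming such an action exists, rule out the values $d-W(n)<0$ and $d-W(n)=1$.

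For the reverse direction, I would argue by exclusion. Suppose $\mathbb{Z}_n=\langle A\rangle$ acts on a simple $d$-dimensional noncommutative torus $\mathcal{A}_\Theta$ in the canonical way; then by definition $A\in{\rm GL}_d(\mathbb{Z})$ has order exactly $n$ and satisfies $A^t\Theta A=\Theta$. First, Kuzmanovich's theorem \cite[Theorem 2.7]{Kuz} (already invoked at the start of Section 3) asserts that ${\rm GL}_d(\mathbb{Z})$ contains an element of order $n$ if and only if $W(n)\leq d$. Hence the mere existence of $A$ forces $d-W(n)\geq 0$, eliminating the case $d-W(n)<0$.

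Next, to eliminate $d-W(n)=1$, one appeals directly to Theorem \ref{no}, which shows that in this case every $\Theta\in\mathcal{T}_{d,A}(\mathbb{R})$ is, up to the equivalence $\Theta\mapsto Q^t\Theta Q$ induced by the Kuzmanovich rational conjugation $Q A Q^{-1}=\Lambda$, of the block form $\Theta'\oplus 0$; consequently $\mathcal{A}_\Theta\cong\mathcal{A}_{\Theta'}\otimes C(\mathbb{T})$, which is never simple. This contradicts the standing assumption that $\mathcal{A}_\Theta$ is simple.

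Combining the two cases leaves $d-W(n)\in\{0\}\cup\{k:k>1\}$ as the only possibility, which is precisely the stated equivalence. Since both supporting results (Theorem \ref{main1} for the construction, Theorem \ref{no} and \cite[Theorem 2.7]{Kuz} for the obstructions) are already in hand, the proof is essentially a one-line citation of each; there is no substantive obstacle beyond writing the case split cleanly.
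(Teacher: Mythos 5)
Your proof is correct and follows exactly the route the paper intends: the existence direction is Theorem \ref{main1}, and the converse combines Kuzmanovich's bound $W(n)\leq d$ (to exclude $d-W(n)<0$) with Theorem \ref{no} (to exclude $d-W(n)=1$). The paper states the corollary without a written proof precisely because it is this immediate combination of the two preceding results.
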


\begin{cor}
For a given dimension $d>3$ which is odd and an order $n$, then there is an action of $\mathbb{Z}_n$ on simple $d$-dimensional noncommutative tori if and only if there is an action of $\mathbb{Z}_n$ of the above type on $d-3$-dimensional noncommutative tori.
\end{cor}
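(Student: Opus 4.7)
The plan is to reduce the claim to Corollary~\ref{iff} and then finish with an elementary parity argument on $d - W(n)$. By Corollary~\ref{iff}, the existence of an action of $\mathbb{Z}_n$ of the stated type on a simple noncommutative $d$-torus is equivalent to the numerical condition $d - W(n) > 1$ or $d - W(n) = 0$, and similarly for dimension $d - 3$. So the corollary reduces to checking that, for odd $d > 3$, these two numerical conditions are equivalent.

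The key observation (already exploited in the proof of Corollary~\ref{iff2}) is that $W(n)$ is always even for $n \geq 2$. I would verify this directly from the two cases of the definition: if $p_1^{e_1} = 2$, then the term $(p_1-1)p_1^{e_1-1} = 1$ cancels with the $-1$, leaving a sum of terms $(p_i-1)p_i^{e_i-1}$ with $p_i$ odd, each of which is even; and if $p_1^{e_1} \neq 2$, then either $p_1$ is odd so every $(p_i-1)$ is even, or $p_1 = 2$ with $e_1 \geq 2$ so $(p_1-1)p_1^{e_1-1} = 2^{e_1-1}$ is even.

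With this parity in hand, I would argue as follows. Since $d$ is odd and $W(n)$ is even, $d - W(n)$ is odd, so $d - W(n) = 0$ is impossible and $d - W(n) > 1$ is equivalent to $d - W(n) \geq 3$. On the other hand, $d - 3$ is even, so $(d-3) - W(n)$ is even, which makes the value $1$ impossible; thus $(d-3) - W(n) > 1$ or $(d-3) - W(n) = 0$ is equivalent to $(d-3) - W(n) \geq 0$, i.e.\ $W(n) \leq d - 3$. Both conditions collapse to the same inequality $d - W(n) \geq 3$, which proves the corollary.

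There is no serious obstacle here; the whole content is the parity of $W(n)$ combined with the bookkeeping in Corollary~\ref{iff}. The only thing to be slightly careful about is ensuring the parity verification covers all subcases of the definition of $W(n)$, and noting that the case $n = 2$ (which gives $W(2) = 0$) is also even and hence compatible with the argument.
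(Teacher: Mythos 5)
Your proof is correct and follows essentially the same route as the paper, which simply notes that $d-W(n)$ is odd and invokes Corollaries \ref{iff} and \ref{iff2}; your write-up just spells out the parity bookkeeping (both conditions collapse to $W(n)\leq d-3$) in full detail.
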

\begin{proof}
Notice that $d-W(n)$ is an odd integer then by Corollary \ref{iff} and Corollary \ref{iff2} we draw the conclusion.
\end{proof}

We mention that Theorem \ref{no} is a generalization of \cite[Theorem 5.2]{LEE} by the following example.
\begin{example}
Consider an action of $\mathbb{Z}_n$ on a simple $3$-dimensional noncommutative torus. Then by Theorem \ref{no} and by the fact that $W(n)$ is always even, it is equivalent to say that $d-W(n)=3$. Thus we obtain $n=2$, in other words, the only action by a nontrivial finite cyclic group on a simple $3$-dimensional torus of the above type is the flip action by $\mathbb{Z}_2$, as stated in \cite[Theorem 5.2]{LEE}.
\end{example}

We rewrite a fact mentioned in \cite[Lemma 1.5]{Phi} as following. In general for a $d$-dimensional noncommutative torus $\mathcal{A}_{\Theta}$ where $\Theta\in \mathcal{T}_d(\mathbb{R})$, write
$$
\Theta=\left( \begin{array}{cc}
\Theta'&(\theta_{id})\\
-(\theta_{id})^t&0\end{array} \right)
$$ 
and the standard generator of $\mathcal{A}_{\Theta}$ as $u_i$ for $i=1,\ldots, d$. Then 
$$
\mathcal{A}_{\Theta'}=C^*\{u_i\mid i=1,\ldots, d-1\},
$$
moreover we can define an action $\alpha: \mathbb{Z}\rightarrow{\rm Aut}(\mathcal{A}_{\Theta'})$ by $\alpha(1)(u_i)=\exp(2\pi i\theta_{id})u_i$ which is homotopic to the identity and 
$$
\mathcal{A}_{\Theta}\cong\mathcal{A}_{\Theta'}\rtimes_{\alpha}\mathbb{Z}.
$$

Return to our current case for the given dimension $d$ and the order $n$ with $d-W(n)=1$. We can construct a noncommutative $d$-torus $\mathcal{A}_{\Theta}$ where $\Theta\in\mathcal{T}_d(\mathbb{R})$ and
$$
\Theta=\left( \begin{array}{cc}
\Theta'&0\\
0&0\end{array} \right),
$$
and $\mathcal{A}_{\Theta}$ admits an action of $\mathbb{Z}_n$. Although $\mathcal{A}_{\Theta}\cong\mathcal{A}_{\Theta'}\otimes C(\mathbb{T})$ is never simple, or equivalently $\Theta$ is never nondegenerate, by our construction $\Theta'$ is nondegenerate, and the simple noncommutative $d-1$-tori $\mathcal{A}_{\Theta'}$ admits an action of $\mathbb{Z}_n$ which is denoted by $\alpha'$. Hence write $u_i$ for $i=1,\ldots,d$ the standard generators of $\mathcal{A}_{\Theta}$, we could define an action
$$
\alpha:\mathbb{Z}\times\mathbb{Z}_n\rightarrow{\rm Aut}(\mathcal{A}_{\Theta})
$$
by the following,
$$
\begin{cases}
\alpha(1, I)(u_i)=u_i, & i=1,\ldots, d-1;\\
\alpha(1, I)(u_i)=\exp(2\pi i \theta)u_i, & i=d;\\
\alpha(0, A)(u_i)=\alpha'(u_i), & i=1,\ldots, d-1;\\
\alpha(0,A)(u_i)=u_i, & i=d
\end{cases}
$$
where $\theta$ is an irrational number. It is actually the product action of two commuting actions. Then we have 
$$
\mathcal{A}_{\Theta}\rtimes_{\alpha}(\mathbb{Z}\times\mathbb{Z}_n)\cong(\mathcal{A}_{\Theta}\rtimes_{\alpha}\mathbb{Z})\rtimes_{\tilde{\alpha}}\mathbb{Z}_n\cong(\mathcal{A}_{\Theta'}\otimes\mathcal{A}_{\theta})\rtimes_{\alpha'}\mathbb{Z}_n.
$$
Since 
$$
\Theta''=\Theta'\oplus\left( \begin{array}{cc}
0&\theta\\
-\theta&0\end{array} \right)
$$
is simple, and action $\alpha'$ of $\mathbb{Z}_n$ on $\mathcal{A}_{\Theta''}\cong\mathcal{A}_{\Theta'}\otimes\mathcal{A}_{\theta}$ is of the above type, the the crossed product $\mathcal{A}_{\Theta}\rtimes_{\alpha}(\mathbb{Z}\times\mathbb{Z}_n)$ is covered by a discussion in Section $3$. We comment that action $\alpha$ restricted to $\mathbb{Z}$ is adding dimension to noncommutative torus $\mathcal{A}_{\Theta}$ and making it a simple one on which there is an action of $\mathbb{Z}_n$ of above type, in our context this should be the action of $\mathbb{Z}$ on a noncommutative torus in which we are interested.

\section{Finitely generated abelian group actions}

Motived by the last part of the former section, we have the following discussion.

For a given dimension $d$ and a finitely generated abelian group $G$, by the structure theorem we have 
$$
G\cong \Big( \prod_{i=1}^{s}\mathbb{Z}_{p_i^{e_i}} \Big)\times \mathbb{Z}^r,
$$
if $d-W(G)=0$ or $d-W(G)>0$, there is an action of $\prod_{i=1}^{s}\mathbb{Z}_{p_i^{e_i}} $ on a simple noncommutative $d$-torus $\mathcal{A}_{\Theta}$ by Theorem \ref{3.5}, say $\alpha'$. If $r>0$ we may define an action $\alpha:G\rightarrow{\rm Aut}(\mathcal{A}_{\Theta})$ with a nondegenerate $\Theta_0\in\mathcal{T}_r(\mathbb{R})$ such that
$$
\mathcal{A}_{\Theta}\rtimes_{\alpha}G\cong(\mathcal{A}_{\Theta}\otimes\mathcal{A}_{\Theta_0})\rtimes_{\alpha'}\prod_{i=1}^{s}\mathbb{Z}_{p_i^{e_i}}.
$$
Note that there is also a similar action when $r=0$ stated in Theorem \ref{3.5}.

If $d-W(n)=1$, there is a noncommutative $d$-torus $\mathcal{A}_{\Theta}$ of form $\mathcal{A}_{\Theta'}\otimes C(\mathbb{T})$, or equivalently, with
$$
\Theta=\left( \begin{array}{cc}
\Theta'&0\\
0&0\end{array} \right)
$$
where $\Theta'\in\mathcal{T}_{d-1}(\mathbb{R})$ is nondegenerate. Both $\mathcal{A}_{\Theta}$ and $\mathcal{A}_{\Theta'}$ admit an action of $\mathbb{Z}_n$. Denote the latter one by $\alpha'$. If additionally $r>0$, we can obtain an action $\alpha:G\rightarrow{\rm Aut}(\mathcal{A}_{\Theta})$. In such case if $r>1$ we obtain $\alpha$ with an irrational number $\theta$ and a nondegenerate $\Theta_0\in\mathcal{T}_{r-1}(\mathbb{R})$ such that 
$$
\mathcal{A}_{\Theta}\rtimes_{\alpha}G\cong(\mathcal{A}_{\Theta'}\otimes\mathcal{A}_{\theta}\otimes\mathcal{A}_{\Theta_0})\rtimes_{\alpha'}\prod_{i=1}^{s}\mathbb{Z}_{p_i^{e_i}}.
$$
Similarly if $r=1$ we obtain $\alpha$ with only an irrational number $\theta$ such that
$$
\mathcal{A}_{\Theta}\rtimes_{\alpha}G\cong(\mathcal{A}_{\Theta'}\otimes\mathcal{A}_{\theta})\rtimes_{\alpha'}\prod_{i=1}^{s}\mathbb{Z}_{p_i^{e_i}}.
$$

\begin{thm}\label{main2}
For a given dimension $d$ and a finitely generated abelian group $G\cong\Big( \prod_{i=1}^{s}\mathbb{Z}_{p_i^{e_i}} \Big)\times \mathbb{Z}^r$, If $d-W(n)=1$ and $r>0$, or $d-W(n)\neq1$ then there is a noncommutative $d$-torus $\mathcal{A}_{\Theta}$ admitting an action of $G$, denoted by $\alpha:G\rightarrow{\rm Aut}(\mathcal{A}_{\Theta})$. We require $\mathcal{A}_{\Theta}$ is pseudo-simple in the first case and simple in the second case. Then the crossed product $\mathcal{A}_{\Theta}\rtimes_{\alpha}G$ is a simple AT algebra, and it is an AF algebra if and only if $r=0$, $d-W(G)=0$ and G satisfies the last condition in Theorem \ref{3.5}.
\end{thm}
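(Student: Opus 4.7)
The plan is to reduce to Theorem \ref{3.5} by peeling off the torsion-free part $\mathbb{Z}^r$ as extra tensor factors inside the crossed product. The constructions in the paragraphs immediately preceding the theorem statement already exhibit candidate actions in both cases. When $d-W(G)\neq 1$, one starts from a simple $\mathcal{A}_{\Theta}$ carrying an action $\alpha'$ of $G_{\rm tor}=\prod_{i=1}^{s}\mathbb{Z}_{p_i^{e_i}}$ supplied by Theorem \ref{3.5}, picks a nondegenerate $\Theta_0\in\mathcal{T}_r(\mathbb{R})$ to accommodate $\mathbb{Z}^r$, and arranges $\alpha$ so that
\[
\mathcal{A}_{\Theta}\rtimes_{\alpha}G\cong(\mathcal{A}_{\Theta}\otimes\mathcal{A}_{\Theta_0})\rtimes_{\alpha'}G_{\rm tor}.
\]
When $d-W(G)=1$ and $r>0$, one uses the pseudo-simple $\mathcal{A}_{\Theta}\cong\mathcal{A}_{\Theta'}\otimes C(\mathbb{T})$ with $\Theta'\in\mathcal{T}_{d-1}(\mathbb{R})$ nondegenerate, spends one generator of $\mathbb{Z}^r$ to promote the $C(\mathbb{T})$ factor to an irrational rotation algebra $\mathcal{A}_{\theta}$, and spends the remaining $r-1$ generators on a nondegenerate $\Theta_0\in \mathcal{T}_{r-1}(\mathbb{R})$, exactly as set up above the theorem.

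Next I would verify that the resulting crossed products are classifiable. Simplicity follows from the tensor factorisation, since the outer factor $\mathcal{A}_{\Theta_0}$ (and, in the pseudo-simple case, the inserted $\mathcal{A}_{\theta}$) is a simple noncommutative torus, while the inner crossed product is isomorphic to a twisted group $C^*$-algebra of $\mathbb{Z}^{d+r}\rtimes G_{\rm tor}$, whose simplicity, tracial Rokhlin property, tracial rank zero, and UCT status all follow from the machinery recalled in the Preliminaries. Applying the K\"unneth theorem (Theorem \ref{Kunneth}) I would then compute $K_\ast$ as an external tensor product; each cyclic factor $\mathcal{A}_{\Theta_i}\rtimes \mathbb{Z}_{p_i^{e_i}}$ has torsion-free $K$-groups by Theorem \ref{Luck thm}, and the noncommutative torus factors have torsion-free $K$-groups of the usual exterior-algebra form. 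Hence $K_\ast(\mathcal{A}_{\Theta}\rtimes_{\alpha}G)$ is torsion-free, and the classification result quoted in the Preliminaries delivers the AT algebra conclusion.

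For the AF characterisation, the crossed product is AF if and only if its $K_1$ vanishes. When $r>0$, the extra tensor factor $\mathcal{A}_{\Theta_0}$ (or $\mathcal{A}_{\theta}$, if $r=1$) has nonvanishing $K_1$, so by K\"unneth the $K_1$ of the whole crossed product is nonzero; hence $r=0$ is necessary. With $r=0$ the claim reduces verbatim to Theorem \ref{3.5}, which then supplies both the remaining necessary conditions ($d-W(G)=0$ and the arithmetic condition on the summands of $G$) and their sufficiency.

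The main obstacle I anticipate lies in the pseudo-simple case $d-W(G)=1$: one must verify both that the combined action $\alpha$ produces a simple crossed product, equivalently that after absorbing one copy of $\mathbb{Z}$ the enlarged torus $\mathcal{A}_{\Theta'}\otimes \mathcal{A}_{\theta}$ is a genuine simple noncommutative $(d+1)$-torus carrying an extension of $\alpha'$, and that the tracial Rokhlin property and tracial rank zero survive this extension, since Theorem \ref{3.5} only provides these properties when the ambient algebra is simple to begin with. Once this extension is justified, the rest of the argument is a bookkeeping exercise with the K\"unneth formula.
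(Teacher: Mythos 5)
Your proposal follows essentially the same route as the paper: the actions are exactly those constructed in the discussion immediately preceding the theorem, and the AT/AF conclusions are obtained by reducing to Theorem \ref{3.5} together with the K\"unneth computation (with the $\mathbb{Z}^r$ factors contributing nonzero $K_1$, forcing $r=0$ for the AF case). The paper's own proof is just this reduction stated in one line, so your more detailed verification --- including the observation that in the pseudo-simple case the crossed product by one copy of $\mathbb{Z}$ promotes $\mathcal{A}_{\Theta'}\otimes C(\mathbb{T})$ to the simple torus $\mathcal{A}_{\Theta'}\otimes\mathcal{A}_{\theta}$, returning everything to the setting of Section 3 --- fills in exactly the steps the paper leaves implicit.
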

\begin{proof}
The existence is by our discussion in this section. Then by Theorem \ref{3.5} we draw the conclusion.
\end{proof}

\section*{Acknowledgment} 
The author would like to express his deep gratitude to his supervisor, Professor Yasuyuki Kawahigashi, for helpful suggestions, warm encouragement and support during two years of his master course. The author would like to thank Dr. Chen Jiang, Dr. Yuki Arano, Dr. Yosuke Kubota, Dr. Shuhei Masumoto, Dr. Takuya Takeishi and Dr. Lu Xu for many useful discussions. The author is grateful to the University of Tokyo for Special Scholarship for International Students (Todai Fellowship).

\end{document}